\documentclass[12pt]{amsart}
\usepackage{amscd} 
\usepackage{amsfonts} 
\usepackage{amssymb} 
\usepackage{latexsym} 
\usepackage[arrow, matrix, curve]{xy}

\newcommand{\ncm}{\newcommand}


\newtheorem{theorem}{Theorem}[section]
\newtheorem{prop}[theorem]{Proposition}
\newtheorem{lemma}[theorem]{Lemma}
\newtheorem{cor}[theorem]{Corollary}
\newtheorem{lem&def}[theorem]{Lemma \& Definition}
\newtheorem{definition}[theorem]{Definition}
\newtheorem{example}[theorem]{Example}

\def\M{\mathcal{M}}

\def\C{\mathbb{C}\,} 
\def\Z{\mathbb{Z}\,} 
 
\def\N{\mathbb{N}\,}

\ncm{\End}{\mbox{\rm End}\,}
\def\Ann{\mbox{\rm Ann}}
\def\Hom{\mbox{\rm Hom}}

\def\|{\, | \,}

\def\id{\mbox{\rm id}}

\def\into{\hookrightarrow}

\def\bra{\langle}
\def\ket{\rangle}

\ncm{\rarr}[1]{\stackrel{#1}{\longrightarrow}}
\ncm{\larr}[1]{\stackrel{#1}{\longleftarrow}}
\def\cop{\Delta}

\def\eps{\varepsilon}

\def\-2{_{(-2)}}
\def\-1{_{(-1)}}
\def\0{_{(0)}}
\def\1{_{(1)}}
\def\2{_{(2)}}
\def\3{_{(3)}}
\def\n{_{(n)}}

\def\du1{\hat 1}

\begin{document}
\title[Subgroup depth via  algebraic quotient modules]{Algebraic quotient modules \\ and Subgroup depth}
\author[A.~Hernandez, L.~Kadison and C.J.~Young]{Alberto Hernandez, Lars Kadison and Christopher  Young} 
\address{Departamento de Matematica \\ Faculdade de Ci\^encias da Universidade do Porto \\ 
Rua Campo Alegre 687 \\ 4169-007 Porto} 
\email{ahernandeza079@gmail.com, lkadison@fc.up.pt} 
\thanks{}
\subjclass{16S40, 16T05, 18D10, 19A22, 20C05}  
\keywords{adjoint action, Cartan matrix,  conditionally faithful, core of subgroup,  cross product, Drinfeld double, Green ring}
\date{} 

\begin{abstract}
In \cite{K2013} it was shown  that subgroup depth may be computed from the permutation module of the left or right cosets: this holds more generally for a Hopf subalgebra, from which we note in this paper that finite depth of a Hopf subalgebra $R \subseteq H$ is equivalent to the $H$-module coalgebra $Q = H/R^+H$ representing an algebraic element in the Green ring of $H$ or $R$.  This approach shows that 
subgroup depth and the subgroup depth of the corefree quotient lie in the same closed interval of length one.  
We also establish a previous claim that the problem of determining if $R$ has finite depth in $H$ is equivalent to determining if $H$ has
finite depth in its smash product $Q^* \# H$.  A necessary condition is obtained for finite depth from stabilization of a descending chain of annihilator ideals of tensor powers of $Q$.   
As an application of  these topics to a centerless finite group $G$, we prove  that the minimum depth of its group $\C$-algebra in the Drinfeld double $D(G)$ is an odd integer, which determines the least tensor power of  the adjoint representation $Q$ that is a faithful  $\C G$-module.  
\end{abstract} 
\maketitle

\section{Introduction and Preliminaries}
\label{one}
One of the themes in Feit's book on group representations is that a permutation module is an algebraic module
(based on Mackey's theorem) \cite[IX.3.2]{Fe}.  Thus given a subgroup $H \leq G$ of a finite group and $k$ an arbitrary field,
the permutation module of cosets satisfies a polynomial equation in either Green ring $A(G)$ or
$A(H)$.  
It is then interesting to ask when the generalized quotient $Q = H/R^+H$ of a finite Hopf subalgebra pair $R \subseteq H$ is an algebraic
module (equivalently, \cite[p.\  250]{BDK}).  In case the isoclass of $Q$ is an algebraic element in the representation ring of $R$, the degree of a minimum polynomial  of $Q$ in $A(R)$ is related to the subalgebra depth of $R$ in $H$ (the minimum even depth of $R \subseteq H$ is  twice this degree).  By depth we mean the (induction-restriction) depth defined  and studied in the papers \cite{BDK, BKK}, and further studied in light of  modular and ordinary representation theory of groups and semisimple algebras in \cite{BK, BK2, BuK, D, F, FKR, HHP, LK2012}. In this paper we carry the connections
established in \cite{K2013} between subalgebra depth of $R \subseteq H$
and module depth of $Q$ further in a few directions; we show that the techniques previously established for computing depth of $R$ in $H$ compute the number $\ell_Q$, equal to the least tensor power that is faithful, of a conditionally faithful projective quotient module $Q$  (see Theorem~\ref{theorem-Burn},  Corollary~\ref{cor-precisedetermination} and Example~\ref{ex-Pass}).
 We also show that subgroup depth satisfies $|d(H,G) - d(H/N,G/N)| \leq 1$ where $N$ is a normal
subgroup of a finite group $G$ contained in a subgroup $H$ (see Section 2.1).  

The topics and layout of the paper are as follows. After an introduction of terminology and previously established facts for subalgebra and subgroup depth in Section 1,  a necessary condition for finite depth for a subalgebra pair $B \subseteq A$ of finite-dimensional $k$-algebras  is given in Section~\ref{four} in the form of 
a matrix inequality between products of matrices of induction and of restriction,
which are related by the Cartan matrices of $A$ and $B$ if $k$ is algebraically closed. 
We review in Section~3 the simplifications for depth of Hopf subalgebra $R  \subseteq H$ in terms of a module depth of
a quotient module $Q := H/R^+H$, based on \cite{K2013}. Noting that a module $W$ over a finite-dimensional Hopf algebra with Hopf ideal $I$ in its annihilator ideal,
has module depth $d(W)$  the same over $H$ or the quotient Hopf algebra $H/I$, we conclude from the close relationship of $d(W)$ and $d(R,H)$
that subgroup depth differs by at most one with the subgroup depth of its corefree quotient (Proposition~\ref{prop-core}).  In Section~\ref{two} we obtain a necessary condition for finite depth involving stabilization of a descending chain of annihilator ideals of tensor powers of $Q$ (Proposition~\ref{prop-necessary}). 
If $R$ is a Hopf subalgebra, we establish a previous claim \cite[4.14, 5.5]{K2013} that the problem of determining if $R$ has finite depth in $H$ is equivalent to determining if $H$ has
finite depth in its smash product $Q^* \# H$ (Theorem~\ref{th-young} and
 Corollary~\ref{cor-equivalence}).  We note that
the minimum depth of a finite group $\C$-algebra in its Drinfeld double is an odd integer determined by the least tensor power of $Q$ that is faithful (Section~\ref{three} and Corollary~\ref{cor-precisedetermination}).

\subsection{Preliminaries on subalgebra depth} Let $A$ be a unital associative algebra over a field $k$. In this paper we assume all algebras and modules to be finite-dimensional vector spaces (although most of the definitions and facts in this subsection do fine without this assumption \cite{LK2012}). The category of finite-dimensional modules over $A$ will be denoted
by $\M_A$.  
Two modules $M_A$ and $N_A$ are \textit{similar} (or H-equivalent) if $M \oplus * \cong q \cdot N := N \oplus \cdots \oplus N$ ($q$ times) and $N \oplus * \cong r \cdot M$
for some $r,q \in \N$.  This is briefly denoted by $M \| q \cdot N$ and $N \| r \cdot M$ for some $q,r \in \N$  $\Leftrightarrow$ $M \sim N$. It is well-known that similar modules have Morita equivalent endomorphism rings.  

 Let $B$ be a subalgebra of $A$ (always supposing $1_B = 1_A$).  Consider the natural bimodules ${}_AA_A$, ${}_BA_A$, ${}_AA_B$ and
${}_BA_B$ where the last is a restriction of the preceding, and so forth.  Denote the tensor powers
of ${}_BA_B$ by $A^{\otimes_B n} = A \otimes_B \cdots \otimes_B A$ for $n = 1,2,\ldots$, which is also a natural bimodule over  $B$ and $A$ in any one of four ways;     set $A^{\otimes_B 0} = B$ which is only a natural $B$-$B$-bimodule.  

\begin{definition}
\label{def-depth}
If $A^{\otimes_B (n+1)}$ is similar to $A^{\otimes_B n}$ as $X$-$Y$-bimodules,  one states that $B \subseteq A$ has  
\begin{itemize}
\item \textbf{depth} $2n+1$ if $X = B = Y$;
\item \textbf{left depth} $2n$ if $X = B$ and $Y = A$;
\item \textbf{right depth} $2n$ if $X = A$ and $Y = B$;
\item \textbf{h-depth} $2n-1$ if $X = A = Y$,
\end{itemize}
valid for even depth and h-depth if $n \geq 1$ and for odd depth if $n \geq 0$. 

Note that if $B \subseteq A$ has h-depth $2n-1$, the subalgebra has (left or right) depth $2n$ by restriction of modules.  Similarly, if $B \subseteq A$ has depth $2n$, it has depth $2n+1$.  If $B \subseteq A$ has depth $2n+1$, it has depth $2n+2$ by tensoring either $-\otimes_B A$ or $A \otimes_B -$ to $A^{\otimes_B (n+1)} \sim A^{\otimes_B n}$.     Similarly, if $B \subseteq A$ has left or right depth $2n$, it has h-depth $2n+1$.  Denote the \textbf{minimum depth}
of $B \subseteq A$ (if it exists) by $d(B,A)$ \cite{BDK}.  Denote the \textbf{minimum h-depth} of $B \subseteq A$ by $d_h(B,A)$ \cite{LK2011}.  Note that
$d(B,A) < \infty$ if and only if 
$d_h(B,A) < \infty$; more precisely, 
\begin{equation}
\label{eq: in}
d_h(B,A) -2 \leq d(B,A) \leq d_h(B,A) + 1
\end{equation}
 if either is finite.  
\end{definition}

 For example, $B \subseteq A$ has depth $1$ iff ${}_BA_B$ and ${}_BB_B$ are similar \cite{BK2, LK2012}.  In this case, it is easy to show that $A$ is algebra isomorphic to $B \otimes_{Z(B)} A^B$ where
$Z(B), A^B$ denote the center of $B$ and centralizer of $B$ in $A$.

Another example, $B \subset A$ has right depth $2$ iff ${}_AA_B$
and ${}_A A \otimes_B A_B$ are similar.  If $A = \C G$ is a group algebra of a  finite group $G$ and $B = \C H$ is a group algebra of a subgroup $H$ of $G$, then $B \subseteq A$ has right depth $2$ iff $H$ is a normal subgroup of $G$ iff $B \subseteq A$ has left depth $2$ \cite{KK}; a similar statement is true for a Hopf subalgebra $R \subseteq H$ of finite index and over any field \cite{BK}.  Ring extensions having depth $2$ are bialgebroid-Galois extensions (if a generator condition on $A_B$ is met); conversely, most Galois extensions have depth $2$ (except the coalgebra-Galois extensions, which include all examples of Hopf subalgebras mentioned below) \cite{BW}.   

Note that $A^{\otimes_B n} \| A^{\otimes_B (n+1)}$ for all $n \geq 2$
and in any of the four natural bimodule structures: one applies $1$ and multiplication to obtain a split monic, or split epi oppositely. For three of the bimodule structures, it is true for $n =1$;  as $A$-$A$-bimodules, equivalently $A \| A \otimes_B A$ as $A^e$-modules, this is the separable extension condition on $B \subseteq A$.  
But $A \otimes_B A \| q \cdot A$ as $A$-$A$-bimodules for some $q \in \N$
is the H-separability condition and implies $A$ is a separable extension of $B$ \cite{K}.  Somewhat similarly, ${}_BA_B \| q  \cdot {}_BB_B$ implies
${}_BB_B \| {}_BA_B$ \cite{LK2012}. It follows that subalgebra depth and h-depth may be equivalently defined by replacing the similarity bimodule conditions for depth and h-depth in Definition~\ref{def-depth} with the corresponding bimodules on 
\begin{equation}
\label{eq: def}
A^{\otimes_B (n+1)} \, \| \, q \cdot A^{\otimes_B n}
\end{equation}
 for some positive integer $q$ \cite{BDK, LK2011, LK2012}.  

Subgroup depth \cite{BKK, BDK} of a finite group-subgroup pair $G \geq H$ over a field
is defined to be the minimum depth $d_k(H,G) := d(kH, kG)$ of the associated finite
group algebra extension.  In \cite{BDK} it was shown that $d_k(H,G)$ is the same for
any field $k$ of the same characteristic; therefore we write $d_0(H,G)$ if $k$ has characteristic zero,
and $d_p(H,G)$ if it has prime $p$ characteristic.  The inequality  $d_0(H,G) \leq d_p(H,G)\leq 2|G: N_G(H) |$ is noted in \cite[4.5]{BDK}, which also extends this inequality to include group algebras over commutative rings, and defines a \textit{combinatorial depth} $d_c(H,G)$ using only the group-theoretic notions
of bisets and their monomorphisms. 
For example, for the permutation groups $\Sigma_n < \Sigma_{n+1}$
and their corresponding group algebras $B \subseteq A$ over any
commutative ring $K$, one has depth $d(B,A) = 2n-1 = d_c(\Sigma_n, \Sigma_{n+1})$  \cite{BDK}, which also notes  from other examples strict inequalities between the various depths. Depths of subgroups in $PGL(2,q)$, twisted group algebra extensions and Young subgroups of $\Sigma_n$ are computed in \cite{F, D, FKR}. 

   If $B$ and $A$ are semisimple complex algebras, the minimum odd depth is computed from powers of an order $r$ symmetric matrix with nonnegative entries $\mathcal{S} := MM^T$ where $M$ is the inclusion matrix (or induction-restriction matrix) 
$K_0(B) \rightarrow K_0(A)$ and $r$ is the number of irreducible representations of $B$ in a basic set of $K_0(B)$; the depth is $2n+1$ if $\mathcal{S}^n$ and $\mathcal{S}^{n+1}$ have an equal number of zero entries \cite{BKK}. 
The minimum even depth is similarly computed considering the zero entries of the  powers $\mathcal{S}^mM$.  The (overall) minimum depth of $B \subseteq A$
is equivalently computed as the least $n \in \N$ for which the induction-restriction matrix $M$ satisfies the   matrix inequality 
\begin{equation}
\label{eq: matrixineqdepth}
M^{n+1} \leq q M^{n-1}
\end{equation}
(for some $q \in \N$, each $(i,j)$-entry, and denoting $M^0 = I_r$, $M^{2m} = \mathcal{S}^m$, $M^{2m+1} =\mathcal{S}^mM$, each $m \in \N$) \cite{BKK}. 

Similarly, the minimum h-depth of $B\subseteq A$ is computed from
powers of an order $s$ symmetric matrix $\mathcal{T} = M^TM$, where $s$ is the rank of $K_0(A)$, and the power $n$ at
which the number of zero entries of $\mathcal{T}^n$ stabilizes \cite{LK2012}. 
From the matrix power definitions,  it follows that a subalgebra pair of semisimple algebras $B \subseteq A$ over a field of characteristic zero  always has finite depth. (In characteristic $p$ finite depth holds if one of $B,A$ is
a separable algebra \cite[Corollary 2.2]{KY}.)  


\section{Depth of subalgebras  projective in a finite-dimensional algebra} 
\label{four}
In this section we look more closely at the matrix inequality condition~(\ref{eq: matrixineqdepth}) for depth explained in Section~1, but for a subalgebra pair of finite-dimensional algebras.  In the presence of radical ideals, the induction-restriction matrix $M$ splits into two matrices related by a pair of Cartan matrices.  

Let $A$ be a finite-dimensional algebra over a field $k$.  Denote the principal right $A$-modules, or projective indecomposables of $A$, by $P_1, \ldots, P_s$. (We sometimes confuse objects and their isoclasses
for the sake of brevity.)  Let $J$ denote the radical ideal of $A$. Then
each $P_i$ is the projective cover of 
$P_i / P_i J := S_i$, the simple $A$-modules where $i = 1,\ldots,s$.  Recall that the Cartan matrix $C$ 
of $A$ is an $s \times s$-matrix of nonnegative entries whose rows give the multiplicity of each simple $S_j$ in the composition factors of $P_i$; one may view $C$ as the matrix of  a linear mapping $K_0(A) \rightarrow G_0(A)$ corresponding to sending a projective into a sum of its simple composition factors
with multiplicity. Recall that $K_0(A) \cong Z^s$ is a free abelian group on the basis $P_1, \ldots, P_s$,
such that a projective $X$ in $K_0(A)$ is a nonnegative sum of the $P_i$ corresponding to its Krull-Schmidt decomposition; also recall that $G_0(A) \cong Z^s$ is the free abelian group on the basis $S_1,\ldots,S_s$ (the Grothendieck group of $A$) such that a module $Y$ in $G_0(A)$ is a nonnegative sum of the $S_i$ corresponding to the multiplicity of its composition factors.  If $k$ is an algebraically closed field, $\dim_k \Hom_A (P_i, X)$ equals the multiplicity of (the isomorphism class of) $S_i$
as a composition factor in a finite-dimensional module $X$ \cite[p.\  45]{ARS}:  in this case,
the Cartan matrix entry $c_{ij} = \dim \Hom_A (P_i, P_j)$ for each $i,j = 1,\ldots,s$. 

Suppose $B \subseteq A$ is a subalgebra of $A$ such that the natural module $A_B$ is projective.
Denote the projective indecomposables of $B$ by $Q_1, \ldots Q_r$, the Cartan matrix of $B$
by $D$, which has entries $d_{ij} = \dim \Hom_B(Q_i,Q_j)$ in case $k$ is algebraically closed.  

Of interest to us are two $r \times s$-matrices with nonnegative entries. (For both matrices, we use the Krull-Schmidt Theorem for finite length modules of Artin algebras.)  First define 
the \textit{matrix of restriction} $M$ with entries given by $m_{ij}$ defined by
\begin{equation}
\label{eq: restrictionmatrix}
P_j \downarrow_B \cong \oplus_{i=1}^r m_{ij} \cdot Q_i 
\end{equation}
since each projective $A$-module restricts to a projective $B$-module by the hypothesis that $A_B$ is projective. Secondly, define the \textit{matrix of induction} for the subalgebra $B \subseteq A$ as the $r \times s$-matrix $N$ with row entries  $n_{ij} \in \N$
given by inducing each of the projective indecomposable $B$-modules,
\begin{equation}
\label{eq: inductionmatrix}
Q_i \otimes_B A \cong \oplus_{j=1}^s n_{ij} \cdot P_j
\end{equation} 

\begin{lemma}
\label{lemma-cartaninductionrestriction}
Suppose $k$ is algebraically closed.  Then the matrices of restriction $M$ and induction $N$
are related by 
\begin{equation}
\label{eq: cartan}
D M = N C
\end{equation}
where $C$ and $D$ denote the Cartan matrices of $A$ and $B$, respectively.
\end{lemma}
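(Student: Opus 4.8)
The plan is to compute the dimension of $\Hom_B(Q_i, P_j\downarrow_B)$ in two ways and compare. Since $k$ is algebraically closed, for any finite-dimensional $B$-module $X$ the number $\dim_k \Hom_B(Q_i, X)$ equals the multiplicity of the simple $S_i^B$ in the composition series of $X$, and in particular $\dim_k \Hom_B(Q_i, Q_\ell) = d_{i\ell}$, the $(i,\ell)$-entry of the Cartan matrix $D$ of $B$; similarly $\dim_k \Hom_A(P_k, P_j) = c_{kj}$. These facts are quoted from the excerpt (following \cite[p.\ 45]{ARS}).

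First I would expand the left-hand side. By the restriction formula~(\ref{eq: restrictionmatrix}), $P_j\downarrow_B \cong \bigoplus_{\ell=1}^r m_{\ell j}\cdot Q_\ell$, so
\begin{equation}
\label{eq: lhs}
\dim_k \Hom_B\bigl(Q_i, P_j\downarrow_B\bigr) = \sum_{\ell=1}^r m_{\ell j}\,\dim_k \Hom_B(Q_i, Q_\ell) = \sum_{\ell=1}^r d_{i\ell}\, m_{\ell j} = (DM)_{ij}.
\end{equation}
Next I would compute the same quantity via adjunction. The key point is that $A_B$ projective makes $-\otimes_B A$ exact, and the standard Hom-tensor adjunction (Frobenius reciprocity in this setting) gives a natural isomorphism $\Hom_B(Q_i, P_j\downarrow_B)\cong \Hom_A(Q_i\otimes_B A, P_j)$. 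Using the induction formula~(\ref{eq: inductionmatrix}), $Q_i\otimes_B A\cong\bigoplus_{k=1}^s n_{ik}\cdot P_k$, whence
\begin{equation}
\label{eq: rhs}
\dim_k \Hom_A\bigl(Q_i\otimes_B A, P_j\bigr) = \sum_{k=1}^s n_{ik}\,\dim_k \Hom_A(P_k, P_j) = \sum_{k=1}^s n_{ik}\, c_{kj} = (NC)_{ij}.
\end{equation}
Comparing~(\ref{eq: lhs}) and~(\ref{eq: rhs}) entrywise yields $DM = NC$.

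The one delicate point, and the step I would write out most carefully, is the adjunction isomorphism $\Hom_B({}_B Q_i, {}_B(P_j)) \cong \Hom_A(Q_i\otimes_B A, P_j)$ with the correct handedness: here $Q_i$ is a right $B$-module, $Q_i\otimes_B A$ a right $A$-module, and one is using that $A$ is an $A$-$B$-bimodule together with the tensor-hom adjunction $\Hom_A(-\otimes_B A, -)\cong \Hom_B(-, \Hom_A(A,-))$ and the identification $\Hom_A(A_A, P_j)\cong P_j$ as left $B$-modules; equivalently this is the unit-counit form of Frobenius reciprocity for the projective (hence faithfully exact, but exactness suffices here) extension $B\subseteq A$. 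No projectivity of $A_B$ is actually needed for the Hom-identity itself, but it is what guarantees that $P_j\downarrow_B$ is genuinely projective so that~(\ref{eq: restrictionmatrix}) makes sense and that all the modules in sight have well-defined composition-factor multiplicities; I would flag that the Krull-Schmidt theorem is being invoked to make $m_{ij}, n_{ij}$ well-defined, exactly as stated before the lemma. Once the adjunction is pinned down, the rest is the bookkeeping in~(\ref{eq: lhs})--(\ref{eq: rhs}).
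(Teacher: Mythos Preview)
Your proof is correct and follows essentially the same approach as the paper: apply the Hom--tensor adjunction $\Hom_A(Q_i\otimes_B A, P_j)\cong \Hom_B(Q_i, P_j\!\downarrow_B)$, substitute the decompositions~(\ref{eq: restrictionmatrix}) and~(\ref{eq: inductionmatrix}), and take dimensions using $\dim\Hom_A(P_k,P_j)=c_{kj}$ and $\dim\Hom_B(Q_i,Q_\ell)=d_{i\ell}$. The only cosmetic issue is that in your discussion of handedness you momentarily write $\Hom_B({}_BQ_i,{}_B(P_j))$ with left-module decorations while correctly stating in the same sentence that $Q_i$ is a right $B$-module; just drop the subscripts on the left.
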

\begin{proof}
From the Hom-Tensor adjoint relation it follows that $$\Hom_A(Q_i \otimes_B A, P_j) \cong
\Hom_B(Q_i, P_j\downarrow_B)$$ \cite{K}. Substitution of Eqs.~(\ref{eq: inductionmatrix}) and~(\ref{eq: restrictionmatrix}) reduces to 
$$ \oplus_{k=1}^s n_{ik} \cdot \Hom_A(P_k,P_j) \cong \oplus_{q=1}^r m_{qj} \cdot \Hom_B(Q_i,Q_q). $$
Taking the dimension of both sides yields $\sum_{k=1}^s n_{ik}c_{kj} = \sum_{q=1}^r m_{qj} d_{iq}$. for each $i = 1,\ldots, r$, $j = 1,\ldots,s$, from which the lemma follows.
\end{proof}

\begin{example}
\label{example-semisimplepair}
\begin{rm}
Suppose $A$ and $B$ are semisimple algebras with $B$ a subalgebra of $A$. Then $P_i = S_i$
so that the Cartan matrix  of $A$ is the identity matrix, $C = I_s$; similarly, the Cartan matrix of $B$
satisfies $D = I_r$.   It follows from the lemma that if the ground field $k$ is algebraically closed, 
$M = N$, which is then the induction-restriction matrix studied in \cite{BKK} for $k$ additionally of characteristic zero, or the induction-restriction table studied in \cite{AB} for subgroup pairs of finite complex group algebras.  That $M = N$ also follows from the proof of Lemma~\ref{lemma-cartaninductionrestriction}
by applying Schur's Lemma for algebraically closed fields to $\Hom_A(S_i,S_j) \cong k \delta_{ij}$ and similarly
$\dim \Hom_B(Q_i,Q_j) = \delta_{ij}$. 
\end{rm}
\end{example}

\begin{example}
\label{example-triangular}
\begin{rm}
Let $A = T_n(k)$ be the upper triangular $n \times n$-matrices over an algebraically closed field $k$.
Let $B = \mbox{Diag}_n(k)$ the diagonal matrices of order $n$, a semisimple subalgebra of $A$.
The Cartan matrix $D = I_n$ is immediate.  
Let $J $ denote the radical ideal of $A$, so that the obvious algebra epimorphism $A \rightarrow B$
is equal to the canonical epi $A \rightarrow A/J$.   Denote the simples of $A$ by $S_1,\ldots,S_n$
which are then also the simples of $B$ by restriction. Thus $Q_i = S_i\downarrow_B$ for each $i = 1,\ldots,n$.  The projective indecomposable right $A$-modules
are given in terms of matrix units by $P_1 = e_{11} A, \ldots, P_n = e_{nn}A$, which are the projective covers of $S_1,\ldots,S_n$, respectively.  Then the matrix of induction from $B$ to $A$ is $N = I_n$, since $S_i \otimes_B A \cong P_i$ is immediate from writing $S_i = B e_{ii}$. 

 The composition series of $P_i$ is given by
$P_i \supset P_i J \supset P_i J^2 \supset \cdots \supset P_i J^{n-i+1} = \{0 \}$ with simple factors
$ P_i / P_i J \cong S_i, P_i J / P_i J^2 \cong S_{i+1}, $ and so forth, obtaining the Cartan matrix
$C = \sum_{i \leq j} e_{ij}$ for $A$.  Restriction of the principal modules, $P_1 \downarrow_B \cong
Q_1 \oplus \cdots \oplus Q_n$ is clear from writing $P_1 = \sum_{j=1}^n e_{1j}k$ and the matrix unit equations $e_{ij} e_{qk} = \delta_{jq} e_{ik}$.  Similarly, $P_i\downarrow_B \cong Q_i \oplus \cdots \oplus Q_n$, whence the restriction matrix of $B \subset A$ is $M = \sum_{i \leq j} e_{ij}$.  
Indeed $M = C$ as implied by Lemma~\ref{lemma-cartaninductionrestriction}. 
\end{rm}
\end{example}

The theorem below does not require that $k$ is algebraically closed.  Set the zeroeth power of a square matrix equal to the identity matrix. Denote the transpose of a matrix $X$ by $X^T$. 
\begin{theorem}
\label{theorem-matrixinequality}
Suppose $B \subseteq A$ is a subalgebra pair of finite-dimensional $k$-algebra with
$A_B$ assumed projective.  If the subalgebra $B \subseteq A$ has left depth $2n$ (respectively, depth $2n+1$), then
\begin{equation}
\label{eq: even}
(MN^T)^n M \leq t(MN^T)^{n-1}M \ \  (\mbox{resp.}\ (MN^T)^{n+1} \leq t(MN^T)^n )
\end{equation}
for some $t \in \N$.  
\end{theorem}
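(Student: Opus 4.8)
The plan is to convert the bimodule-similarity condition defining depth into an inequality between multiplicities of indecomposable projective $B$-modules inside the tensor powers $A^{\otimes_B n}$, and then to push this inequality through a family of additive functors. This is the strategy used for semisimple pairs in \cite{BKK}; what makes it work in the presence of radical ideals is precisely the hypothesis that $A_B$ is projective.

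First I would record the relevant consequences of $A_B$ being projective. Restriction carries projective $A$-modules to projective $B$-modules (this is \eqref{eq: restrictionmatrix}); and since $- \otimes_B A$ sends a direct summand of a finite free right $B$-module to a direct summand of a finite free right $A$-module, the tensor power $A^{\otimes_B n}$ is projective as a right $A$-module, and hence, after restriction, as a right $B$-module, for every $n \geq 1$. I would also use the Hom-tensor adjunctions $\Hom_A(Y \otimes_B A, Z) \cong \Hom_B(Y, Z \downarrow_B)$ appearing in the proof of Lemma~\ref{lemma-cartaninductionrestriction}, together with the reformulation of depth in \eqref{eq: def}: left depth $2n$ (resp.\ depth $2n+1$) is equivalent to $A^{\otimes_B (n+1)} \| t \cdot A^{\otimes_B n}$ as $B$-$A$-bimodules (resp.\ $B$-$B$-bimodules) for some $t \in \N$, with the convention $A^{\otimes_B 0} = B$.

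The crux is a multiplicity dictionary. Fix a decomposition $1_B = \sum_{i=1}^r e_i$ into primitive idempotents, so that $Q_i \cong e_i B$, and for a $B$-$B$-bimodule $X$ set $G_i(X) := e_i X \downarrow_B$ (the left action by $e_i$, followed by restriction to the right $B$-module structure). This functor is additive; for $X = A^{\otimes_B n}$ it is a direct summand of the projective right $B$-module $A^{\otimes_B n}$, hence itself projective; and I claim its indecomposable-projective multiplicity vector is the $i$-th column of $(MN^T)^n$. The base case is $G_i(B) = Q_i$, and the inductive step writes $A^{\otimes_B (n+1)} = A^{\otimes_B n} \otimes_B A$ and obtains $G_i(A^{\otimes_B (n+1)})$ from $G_i(A^{\otimes_B n})$ by restricting to $B$, inducing back up via \eqref{eq: inductionmatrix}, and restricting again via \eqref{eq: restrictionmatrix}, which multiplies the multiplicity vector by $MN^T$ (using $\sum_p m_{qp} n_{lp} = (MN^T)_{ql}$). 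Applying $G_i$ to the similarity $A^{\otimes_B (n+1)} \| t \cdot A^{\otimes_B n}$ of $B$-$B$-bimodules, and using that a direct summand of a projective $B$-module has no larger multiplicity of any indecomposable projective, gives $[(MN^T)^{n+1}]_{qi} \leq t\,[(MN^T)^n]_{qi}$ for all $q, i$, i.e.\ the asserted inequality $(MN^T)^{n+1} \leq t(MN^T)^n$ for depth $2n+1$. For left depth $2n$ I would run the same scheme on the $B$-$A$-bimodules $A^{\otimes_B n}$ with a functor $F_j$ attached to each indecomposable projective $P_j$ of $A$: take $F_j(X) = \Hom_A(X, P_j)$, a right $B$-module via the left $B$-action on $X$, use the Frobenius reciprocity $\Hom_A(Y \otimes_B A, P_j) \cong \Hom_B(Y, P_j \downarrow_B)$ as in the proof of Lemma~\ref{lemma-cartaninductionrestriction}, and unwind it down the tower to identify the multiplicity matrix of $F_j(A^{\otimes_B n})$ with $(MN^T)^{n-1}M$ (the base case $F_j(A) \cong P_j \downarrow_B$ being the $j$-th column of $M$); then $F_j$ applied to the similarity yields $(MN^T)^n M \leq t(MN^T)^{n-1}M$.

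I expect the main obstacle to be the even-depth half of this dictionary. The hypothesis controls $A_B$, and with it the \emph{right} $B$-module structure of each $A^{\otimes_B n}$, whereas the functors $F_j$ feed on the \emph{left} $B$-module structure of $A^{\otimes_B n}$ — the one attached to the outer $B$ of a $B$-$A$-bimodule — which is not controlled by $A_B$ alone. One must therefore argue that the Frobenius-reciprocity reduction nonetheless terminates in a projective right $B$-module whose indecomposable multiplicities are \emph{exactly} $(MN^T)^{n-1}M$, and not some Cartan-twisted variant, and that this is so over an arbitrary ground field. (For the right-depth analogue the idempotent-probing of the third paragraph applies almost verbatim, since there one probes the structure restricted from a projective $A$-module, so the difficulty is really the left/right asymmetry of the projectivity hypothesis.) The odd-depth statement, by contrast, is essentially bookkeeping once the projectivity observations of the second paragraph are in hand.
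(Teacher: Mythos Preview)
Your odd-depth argument is exactly the paper's: the functor $G_i(X)=e_iX\!\downarrow_B$ is nothing but $Q_i\otimes_B X\!\downarrow_B$, and the paper tensors the depth-one bimodule condition by $Q_i\otimes_B-$, reads off the $Q$-multiplicities $(MN^T)_{qi}$ of the resulting projective right $B$-module, and then says ``the rest of the proof is a similar application.''

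For even depth you take an unnecessary detour through the contravariant functor $F_j=\Hom_A(-,P_j)$, and the obstacle you flag is genuine for that choice: unwinding $\Hom_A(A^{\otimes_B n},P_j)$ by adjunction lands you in $\Hom_B({}_BA_B,-)$ with the right $B$-structure coming from the \emph{left} $B$-action on $A$, about which the hypothesis $A_B$ projective says nothing. The paper's implicit route avoids this entirely by staying covariant. For left depth $2n$ one applies the \emph{same} probe $Q_i\otimes_B-$ to the $B$-$A$-bimodule similarity and reads the result as a projective \emph{right $A$-module}: $Q_i\otimes_BA^{\otimes_B n}$ is $A$-projective because each step inducts a $B$-projective, and its $P_j$-multiplicity row is the $i$-th row of $(NM^T)^{n-1}N$. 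Dually --- and this is precisely what your parenthetical about the right-depth analogue already observes --- for right depth $2n$ one probes the $A$-$B$-bimodule with $f_j\cdot-$ (equivalently $P_j\otimes_A-$) and restricts to $B$; the $Q_q$-multiplicity vector of $f_jA^{\otimes_B n}\!\downarrow_B$ is the $j$-th column of $(MN^T)^{n-1}M$, and the similarity yields $(MN^T)^nM\le t(MN^T)^{n-1}M$ exactly as stated. In either direction only $A_B$ projective is used, and no $\Hom$ functor is needed. So the gap in your even-depth plan is methodological rather than essential: drop $F_j$ in favour of the covariant tensor probe (which your own parenthetical already supplies for the right-depth case) and the difficulty evaporates.
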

\begin{proof}
Suppose $B \subseteq A$ has depth $1$.  Then for some $B$-$B$-bimodule $W$,
we have 
\begin{equation}
\label{eq: start}
{}_BA_B \oplus {}_BW_B \cong t \cdot {}_BB_B
\end{equation}
 for some positive $t \in \N$.    Tensoring Eq.~(\ref{eq: start}) to
the right $B$-projective indecomposable $Q_i$, one obtains after a standard cancellation, 
\begin{equation}
\label{eq: intermediate}
Q_i \otimes_B A \downarrow_B \oplus \ Q_i \otimes_B W_B \cong t \cdot Q_i.
\end{equation}
By the Krull-Schmidt Theorem, there is $w_i \in \N$ such that $Q_i \otimes_B W_B \cong w_i \cdot Q_i$  for each $i = 1,\ldots,r$; and using Eqs.~(\ref{eq: inductionmatrix}) and~(\ref{eq: restrictionmatrix}),
$Q_i \otimes_B A_B \cong (\sum_{j=1}^s n_{ij} m_{ij}) \cdot Q_i$.  It follows from $w_i \geq 0$ 
and Eq.~(\ref{eq: intermediate})
 that $MN^T \leq tI_r$.  The rest of the proof is a similar application of the matrices of restriction and induction to the characterization of depth $2n, 2n+1$ subalgebra in Eq.~(\ref{eq: def}).
\end{proof}
In \cite[2.1, 3.5]{BKK} or Eq.~(\ref{eq: matrixineqdepth}) the matrix inequality~(\ref{eq: even}) with $M = N$ \textit{characterizes} a depth $n$ 
semisimple complex algebra-subalgebra pair $B \subseteq A$.  
\begin{example}
\label{example-counter}
\begin{rm}
Example~\ref{example-triangular}  provides a counterexample to the converse for Theorem~\ref{theorem-matrixinequality}.  Recall that $A$ is the upper triangular matrix algebra and $B$ is the subalgebra of diagonal matrices. Then the minimum depth $d(B,A)$ is computed in \cite{KY} as the semisimple subalgebra of quiver vertices within the path algebra for the quiver $$1 \rightarrow 2 \rightarrow \cdots \rightarrow n-1 \rightarrow n.$$
The depth satisfies $d(B,A) = 3$ as a corollary of \cite[Section 6, first paragraph]{KY}.  However,
we computed the $n \times n$ restriction matrix $M = \sum_{i \leq j} e_{ij}$ in terms of matrix units, and
the induction matrix $N = I_n$.  It follows that $MN^T = M$, all of whose powers satisfy
$M^s \leq tM^{s-1}$ for integers $s \geq 2$ and some positive $t \in \N$ (depending on $s$), since  the set of upper triangular matrices with only positive entries is closed under matrix multiplication.
In particular, the subalgebra $B$ does not have depth two in $A$, although it satisfies the depth two
matrix inequality $M^2 \leq nM$ (taking $t = n$) in Theorem~\ref{theorem-matrixinequality}.
\end{rm}
\end{example}


\section{Depth of Hopf subalgebras, modules and subgroup depth}  Let $ H$ be a finite-dimensional Hopf algebra over an arbitrary field $k$ with coproduct denoted by $\cop(h) = h\1 \otimes h\2$
(for each $h \in H$, an abbreviated version of Sweedler's $\sum_{(h)} h\1 \otimes h\2$). Let $R \subseteq H$ be a Hopf subalgebra, so $\cop(R) \subseteq R \otimes R$ and the antipode satisfies $S(R) = R$.   It was shown in \cite[Prop.\ 3.6]{K2013} that the tensor powers  of $H$ over $R$, denoted by $H^{\otimes_R n}$,  reduce to tensor powers of the generalized quotient $Q = H/ R^+H$ as follows: $H^{\otimes_R n} \stackrel{\cong}{\longrightarrow} H \otimes Q^{\otimes (n-1)}$ given by 
\begin{eqnarray}
\label{eq: arrow}
 x \otimes_R y \otimes_R \cdots \otimes_R z  & \mapsto &  xy\1\cdots z\1 \otimes \overline{y\2 \cdots z\2} \otimes \cdots \otimes \overline{z_{(n)}}.
\end{eqnarray}
This is an $H$-$H$-bimodule mapping where the right $H$-module structure on $H \otimes Q \otimes \cdots \otimes Q$ is given by the diagonal action of $H$: $(y \otimes v_1 \otimes \cdots \otimes v_{n-1}) \cdot h = yh\1 \otimes v_1 h\2 \otimes \cdots \otimes v_{n-1} h\n$.   This shows quite clearly
that the following definition will be of interest to computing $d(R,H)$.  Let $W$ be a right $H$-module and
$T_n(W) := W \oplus W^{\otimes 2} \oplus \cdots \oplus W^{\otimes n}$.  

\begin{definition}
A module $W$ over a Hopf algebra $H$ has \textbf{depth} $n$ if $T_{n+1}(W) \| q \cdot T_n(W) $
and depth $0$ if $W$ is isomorphic to a direct sum of copies of $k_{\eps}$, where $\eps$ is the counit.
Note that this entails that $W$ also has depth $n+1$, $n+2$, $\ldots$.  Let $d(W, \M_H)$ denote
its \textbf{minimum depth}.  If $W$ has a finite depth, it is said to be \textit{algebraic module}.  
  \end{definition}

The following lemma is worth noting here.  
\begin{lemma}
\label{lemma-modI}
Suppose a Hopf ideal $I$ is contained in the annihilator ideal of a module $W$ over a Hopf algebra $H$.
Then depth of $W$ is the same over $H$ or $H/I$.
\end{lemma}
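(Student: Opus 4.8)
The statement concerns a module $W$ over a finite-dimensional Hopf algebra $H$, with a Hopf ideal $I \subseteq \Ann_H(W)$, and the claim is that $d(W, \M_H) = d(W, \M_{H/I})$, where $\bar H := H/I$ is the quotient Hopf algebra. The natural strategy is to show that the relevant tensor-power constructions are unchanged — up to module isomorphism — when passing from $H$ to $\bar H$, so that the similarity relations $T_{n+1}(W) \| q \cdot T_n(W)$ hold over $H$ if and only if they hold over $\bar H$.

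\textbf{Key steps.} First I would record that since $I$ is a Hopf ideal, $\bar H = H/I$ is genuinely a Hopf algebra and the projection $\pi : H \to \bar H$ is a Hopf algebra map; in particular $\cop_H$ descends to $\cop_{\bar H}$ compatibly with $\pi \otimes \pi$. Second, the restriction-of-scalars functor $\pi^* : \M_{\bar H} \to \M_H$ is fully faithful, and its essential image is exactly the full subcategory of $H$-modules annihilated by $I$; this is a standard fact. So I would identify $\M_{\bar H}$ with that subcategory of $\M_H$. Third — and this is the technical heart — I need that for $H$-modules $U, V$ each annihilated by $I$, the tensor product $U \otimes V$ (formed over $k$, with the diagonal $H$-action via $\cop_H$) is again annihilated by $I$, and moreover the $H$-action on $U \otimes V$ coincides with restriction along $\pi$ of the diagonal $\bar H$-action via $\cop_{\bar H}$. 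The annihilation claim follows because $\cop_H(I) \subseteq I \otimes H + H \otimes I$ (the Hopf ideal condition), so for $x \in I$ the element $\cop_H(x)$ acts as zero on $U \otimes V$ once $I$ kills each of $U$ and $V$. Granting this, $U \otimes V$ as an $H$-module is $\pi^*$ of a well-defined $\bar H$-module, and by uniqueness of the induced action and compatibility of the coproducts that $\bar H$-module is precisely the $\bar H$-tensor product. By induction, every tensor power $W^{\otimes m}$ is annihilated by $I$ and its $H$-structure is $\pi^*$ of the corresponding $\bar H$-tensor power; hence $T_n(W)$ over $H$ equals $\pi^*$ applied to $T_n(W)$ over $\bar H$ for every $n$. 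Fourth, since $\pi^*$ is additive and fully faithful, it reflects and preserves direct sums and direct summands, so $T_{n+1}(W) \| q \cdot T_n(W)$ as $H$-modules if and only if the same holds as $\bar H$-modules. The depth-$0$ case is handled separately but is equally immediate: $W \cong (k_\eps)^{\oplus d}$ over $\bar H$ iff over $H$, because the counits are compatible via $\pi$. Taking the minimum over $n$ then yields $d(W, \M_H) = d(W, \M_{\bar H})$.

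\textbf{Main obstacle.} There is no deep obstacle; the proof is essentially a bookkeeping exercise in checking that the Hopf-ideal hypothesis makes all the tensor and diagonal-action constructions descend cleanly. The one point demanding genuine care is verifying that $I$ annihilates tensor powers, i.e. that $\cop_H(I) \subseteq I \otimes H + H \otimes I$ suffices, and then tracking that the resulting $\bar H$-action on $W^{\otimes m}$ is the honest $\bar H$-diagonal action rather than some twist — this is where the compatibility $\pi \circ \cop_H = \cop_{\bar H} \circ \pi$ (iterated) does the work. Once that identification of module structures is nailed down, everything else is formal consequence of $\pi^*$ being fully faithful and additive.
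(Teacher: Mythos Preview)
Your proposal is correct and follows essentially the same approach as the paper's (very terse) proof: the paper simply notes that a Hopf ideal $I \subseteq \Ann_H W$ annihilates every tensor power of $W$ because $I$ is a coideal, and that split epis as in $T_{n+1}(W)\,|\, t\cdot T_n(W)$ descend and lift along $H \to H/I$. Your write-up expands both of these points carefully---packaging the second as full faithfulness of $\pi^*$ and checking that the diagonal $H$- and $\bar H$-actions on $W^{\otimes m}$ match---but the underlying argument is the same.
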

\begin{proof}
The lemma is proven by noting that a Hopf ideal $I$ in $\Ann_H W$ is contained in the annihilator ideal of each tensor power of $W$,  since $I$ is a coideal (cf.\  Eq.~(\ref{eq: defi}) below).  Additionally, split epis as in $T_{n+1}(W) \| t \cdot T_n(W) $ descend and lift along $H \rightarrow H/I$.  
\end{proof}

Algebraic $H$-modules is a terminology consistent with algebraic module over group algebras
for the following reason.  Since $T_m(W) \| T_{m+1}(W)$, the indecomposable summands of $T_m(W)$
occur again (up to isomorphism) in the Krull-Schmidt decomposition of $T_{m+1}(W)$.   
If $W$ has depth $n$, all $T_m(W)$ and their summands $W^{\otimes m}$ for $m \geq n$
are expressible as sums of the indecomposable summands of $T_n(W)$.  This should be compared
to \cite[Chapter II.5.1]{Fe} to see that algebraic modules have finite depth and conversely; the proof
does not depend on the commutativity of the Green ring of a group algebra (see \cite[(1.3)]{CR}).  
Recall that the Green ring of a Hopf algebra $H$ over a field $k$, denoted by $A(H)$, is the free abelian group with basis consisting of  indecomposable $H$-module isoclasses, 
with addition given by direct sum, and the multiplication in its ring structure given by the tensor product.  For example,
$K_0(H)$ is a finite rank ideal in $A(H)$, since $P \otimes_k X$ is projective if  $P$ is projective and $X$ is a module (also a known fact for finite tensor categories \cite[Prop.\ 2.1]{EO}).  
As shown in \cite{Fe}, a finite depth $H$-module $W$ satisfies a polynomial with integer coefficients in $A(H)$, and conversely.  

\begin{example}
\begin{rm}
The paper \cite{C} mentions that the principal block of the simple group $M_{11}$ contains
$5$-dimensional simple modules that are not algebraic. 
\end{rm}
\end{example}

The main theorem in \cite[5.1]{K2013} proves from the basic Eq.~(\ref{eq: arrow}) that Hopf subalgebra (minimum) depth and depth of its generalized quotient $Q$ are closely related by
\begin{equation}
\label{eq: inequalityfordepth}
2d(Q,\M_R) + 1 \leq d(R,H) \leq 2d(Q, \M_R) + 2.
\end{equation}
Note that one restricts $Q$ to an $R$-module in order to obtain the better result on depth. In contrast minimum h-depth satisfies the equality, $  d_h(R,H) = 2d(Q, \M_H) + 1 $  \cite[5.1]{K2013}, but the interval in Eq.~(\ref{eq: in}) gives less precise information for ordinary depth.  

Next we combine the observations above with known results about algebraic modules of group algebras (see also \cite{Fe}).

\begin{prop}
\label{prop-berger}
Let $k$ be an algebraically closed field of characteristic $p > 0$.  Suppose $H$ is a finite-dimensional Hopf algebra with subgroup $G$ of grouplike elements that is solvable.  Let $Q = H/R^+H$ where $R = kG$.  If $Q$ is semisimple, then $R$ has finite depth in $H$.   
\end{prop}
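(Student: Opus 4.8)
The plan is to reduce the statement to a known finiteness criterion for algebraic modules over group algebras of solvable groups, using the structural results already assembled in this section. First I would invoke Lemma~\ref{lemma-modI}: the kernel $I$ of the algebra map $kG \to kG$ obtained from the Hopf projection is irrelevant here, but the relevant point is that the depth of the $R$-module $Q$ only depends on $Q$ as a module over $R = kG$, so by Eq.~(\ref{eq: inequalityfordepth}) it suffices to show that $Q$ is an algebraic $R$-module, i.e.\ that $d(Q,\M_R) < \infty$. Thus the whole problem is transferred from the Hopf subalgebra pair $R \subseteq H$ to a question purely about the module category of the group algebra $kG$ of the solvable group $G$.

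Next I would exploit the hypothesis that $Q$ is semisimple together with solvability of $G$. Over a field of characteristic $p$, a theorem of Berger (the classical statement being that every simple module of a $p$-solvable group is algebraic, cf.\ \cite{Fe}) shows that each simple $kG$-module is an algebraic module; since $G$ is solvable it is in particular $p$-solvable, so this applies. Because the class of algebraic modules is closed under taking direct sums (and direct summands), and $Q$, being semisimple, is a finite direct sum of simple $kG$-modules, $Q$ itself is an algebraic $kG$-module. Here I am using the discussion following Lemma~\ref{lemma-modI} identifying algebraic modules with finite-depth modules: the indecomposable summands appearing in all tensor powers $Q^{\otimes m}$ of an algebraic module form a finite set, which forces a relation $T_{n+1}(Q) \,\|\, q \cdot T_n(Q)$ for suitable $n,q$, i.e.\ $d(Q,\M_R) < \infty$.

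Finally I would feed $d(Q,\M_R) < \infty$ back into the inequality $2d(Q,\M_R)+1 \leq d(R,H) \leq 2d(Q,\M_R)+2$ from Eq.~(\ref{eq: inequalityfordepth}), which immediately yields $d(R,H) < \infty$, that is, $R$ has finite depth in $H$.

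The main obstacle, and the step requiring the most care, is the passage "$Q$ semisimple and $G$ solvable $\Rightarrow$ $Q$ algebraic as a $kG$-module": one must be sure that Berger's algebraicity theorem is being applied with the correct hypotheses (it is the simple modules of a $p$-solvable group that are algebraic, and solvable groups are $p$-solvable for every $p$), and that closure of algebraic modules under direct sums — which in the Green-ring language is closure of the set of algebraic elements under addition — is legitimately available in characteristic $p$ without any commutativity assumption on $A(kG)$; this is exactly the point emphasized in the remarks after Lemma~\ref{lemma-modI}, so the groundwork is already in place. A secondary subtlety is that the grouplike elements $G$ of $H$ genuinely span a Hopf subalgebra $R = kG$ with $Q = H/R^+H$ a well-defined $H$-module coalgebra, so that Eq.~(\ref{eq: inequalityfordepth}) applies verbatim; this is standard but should be noted.
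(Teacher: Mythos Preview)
Your proposal is correct and follows essentially the same route as the paper: Berger's theorem gives that each simple $kG$-module is algebraic, closure of algebraic modules under direct sums makes the semisimple $Q$ algebraic, and the inequality~(\ref{eq: inequalityfordepth}) then yields finite depth of $R$ in $H$. The only difference is cosmetic: your opening invocation of Lemma~\ref{lemma-modI} is unnecessary (as you yourself note), and the paper simply appeals to the commutativity of the Green ring $A(kG)$ for the sum-closure step rather than the more general remark you cite.
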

\begin{proof}
The paper \cite{Be} shows that if $W$ is a simple $R$-module, it is an algebraic $R$-module.  But the isoclass of $Q$ is a sum of simples in the commutative $\Z$-algebra $A(R)$, where sums of algebraic elements are algebraic (see \cite[(1.5)]{CR}).  Then the statement follows from the inequality~(\ref{eq: inequalityfordepth}).  
\end{proof}
\subsection{Subgroup depth of a corefree subgroup pair}
\label{}
Recall that the core $\mbox{\rm Core}_G(H)$ of a subgroup pair of finite groups $H \leq G$ is the intersection of conjugate subgroups of $H$,; equivalently, the largest normal subgroup contained in $H$.
Note that if $N = \mbox{\rm Core}_G(H)$, then $\mbox{\rm Core}_{G/N}(H/N)$ is the one-element group, i.e., $H/N \leq G/N$ is a \textit{corefree} subgroup.  Theorem 6.9 in \cite{BKK} shows that if $N$ is the
intersection of $n$ conjugates of $H$, then $d(H,G) \leq 2n$; if moreover $N$ is contained in the center of $G$, $d(H,G) \leq 2n-1$.

In \cite[2.6, 6.8]{BKK} the following example of  minimum depth $4$ is observed. Let $G = S_4$, $H = D_8$, the dihedral group of
$8$ elements embedded in $G$, and $N = \{ (1), (12)(34), (13)(24), (14)(23) \}$.   Then $G/N \cong \Sigma_3$,
$H/N \cong S_2$, and minimum depth  satisfies  $d_0(G,H) = 4$ but  $d_0(H/N, G/N) = 3$.    Like combinatorial depth in 
\cite[Theorem 3.12(d)]{BDK}, subgroup depth satisfies the following inequality,  finite
groups $G \geq H$ with $N \leq \mbox{\rm Core}_G(H)$:
 \begin{prop}
\label{prop-core}
Suppose $N$ is a normal subgroup of a finite group $G$ contained in a subgroup $H \leq G$.  Then $$ d_0(H/N, G/N)\leq d_0(H,G) \leq d_0(H/N, G/N) +1.$$
\end{prop}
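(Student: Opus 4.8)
The plan is to collapse both subgroup depths onto a single module depth and then combine an exact interval estimate (for the right-hand inequality) with a functorial comparison under base change (for the left-hand inequality). First I would describe the two generalized quotients. For $\C H\subseteq\C G$ it is $Q=\C G/(\C H)^{+}\C G$, the permutation module on the right cosets $H\backslash G$; for $\C(H/N)\subseteq\C(G/N)$ it is $\overline Q=\C(G/N)/(\C(H/N))^{+}\C(G/N)$, the permutation module on $(H/N)\backslash(G/N)$. Since $N\trianglelefteq G$ and $N\subseteq H$, the assignment $Hg\mapsto (H/N)(gN)$ is a bijection of coset spaces and $N$ acts trivially on $H\backslash G$ (because $gng^{-1}\in N\subseteq H$). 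Hence the Hopf ideal $I_{0}=\Ker(\C H\to\C(H/N))$ lies in $\Ann_{\C H}(Q)$, and $Q$ restricted to $\C H$ is the inflation of $\overline Q$ along $\C H\to\C(H/N)$. Lemma~\ref{lemma-modI}, applied to the Hopf algebra $\C H$ with this Hopf ideal, then yields
\[
d(Q,\M_{\C H})=d(Q,\M_{\C(H/N)})=d(\overline Q,\M_{\C(H/N)})=:\delta .
\]

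Next I would extract the upper bound. Both $\C H\subseteq\C G$ and $\C(H/N)\subseteq\C(G/N)$ are Hopf subalgebra pairs over $\C$, so Eq.~(\ref{eq: inequalityfordepth}) applies to each and, using the common value $\delta$, gives
\[
d_{0}(H,G),\ d_{0}(H/N,G/N)\in\{\,2\delta+1,\ 2\delta+2\,\}.
\]
In particular $d_{0}(H,G)\le 2\delta+2\le d_{0}(H/N,G/N)+1$, which is exactly the right-hand inequality. This already recovers the symmetric estimate $|d_{0}(H,G)-d_{0}(H/N,G/N)|\le 1$, but it does not by itself fix the direction of the left-hand inequality.

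For the left-hand bound $d_{0}(H/N,G/N)\le d_{0}(H,G)$ I would argue by base change along $\pi\colon\C G\twoheadrightarrow\C(G/N)$. Writing $A=\C G$, $\overline A=\C(G/N)$, $B=\C H$, $\overline B=\C(H/N)$, I consider the additive functors $\overline B\otimes_{B}(-)\otimes_{B}\overline B$, $\overline B\otimes_{B}(-)\otimes_{A}\overline A$ and $\overline A\otimes_{A}(-)\otimes_{B}\overline B$ on $B$-$B$-, $B$-$A$- and $A$-$B$-bimodules. The key point is that each carries the tensor power $A^{\otimes_{B}n}$ to $\overline A^{\,\otimes_{\overline B}n}$ in the matching bimodule structure; this telescopes from the natural isomorphisms $\overline B\otimes_{B}A\cong\overline A$ and $\overline A\otimes_{B}A\cong\overline A\otimes_{\overline B}\overline A$, together with $\overline A\otimes_{B}\overline B\cong\overline A$ and $\overline A\otimes_{B}\overline A\cong\overline A\otimes_{\overline B}\overline A$ (the latter two because all $B$-actions involved factor through $\overline B$, so $\otimes_{B}$ and $\otimes_{\overline B}$ coincide). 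Since an additive functor preserves the similarity relation $\sim$, a depth-$m$ similarity of tensor-power bimodules for $\C H\subseteq\C G$ (in whichever parity realizes the minimum, handled by the appropriate one of the three functors, matching the even and odd conditions of Definition~\ref{def-depth}) is transported to the corresponding similarity for $\C(H/N)\subseteq\C(G/N)$; hence $d_{0}(H/N,G/N)\le d_{0}(H,G)$.

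The main obstacle is the tensor-power identity $\overline B\otimes_{B}A^{\otimes_{B}n}\otimes_{B}\overline B\cong\overline A^{\,\otimes_{\overline B}n}$ and its one-sided variants. The building blocks $\overline B\otimes_{B}A\cong\overline A$ and $\overline A\otimes_{B}A\cong\overline A\otimes_{\overline B}\overline A$ are each proved by writing down the evident natural surjection ($\overline h\otimes g\mapsto\overline{hg}$, respectively $\overline g\otimes g'\mapsto\overline g\otimes\overline{g'}$) and checking that it is an isomorphism by a dimension count; here one uses the crucial rank matching $[G/N:H/N]=[G:H]$, valid precisely because $N\subseteq H$, which makes the freeness of $\C G$ over $\C H$ coincide with that of $\C(G/N)$ over $\C(H/N)$ so that no indecomposable summands are lost under $\pi$. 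Verifying well-definedness over $\otimes_{B}$ versus $\otimes_{\overline B}$, and confirming that the three functors land in the correct bimodule categories for the even and odd depth conditions, is the routine but essential bookkeeping that completes the argument.
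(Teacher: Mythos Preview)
Your proof is correct. For the upper bound $d_0(H,G)\le d_0(H/N,G/N)+1$ you follow exactly the paper's line: identify $Q$ as the coset permutation module, observe that the Hopf ideal $\ker(\C H\to\C(H/N))$ annihilates it, invoke Lemma~\ref{lemma-modI} to equate $d(Q,\M_{\C H})$ with $d(\overline Q,\M_{\C(H/N)})$, and then apply the interval estimate~(\ref{eq: inequalityfordepth}) to both pairs. For the lower bound $d_0(H/N,G/N)\le d_0(H,G)$ the paper simply quotes \cite[6.8]{BKK} (established there over an algebraically closed field of characteristic zero via the inclusion matrix description, then transported to $\C$ by the field-independence result of \cite{BDK}), whereas you supply a self-contained argument: you push the depth-witnessing bimodule similarity through the base-change functors $\overline B\otimes_B(-)\otimes_B\overline B$ and its one-sided variants, using the telescoping identities $\overline B\otimes_B A\cong\overline A$ and $\overline A\otimes_B A\cong\overline A\otimes_{\overline B}\overline A$ (both of which hold because $N\trianglelefteq G$ makes $(kN)^+\C G$ a two-sided ideal, so every $B$-action in sight factors through $\overline B$). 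Your route has the advantage of being independent of the ground field and of not relying on the semisimple matrix machinery of \cite{BKK}; the paper's route is shorter on the page but outsources the work.
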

\begin{proof}
It is noted in \cite[6.8]{BKK} that for $k$ an algebraically closed field of characteristic zero, the inequality $d_k(H/N, G/N) \leq d_k(H,G)$ is satisfied.  But it is shown in \cite{BDK} that depth
of a finite group algebra extension is the same over any field of the same characteristic, in this case zero.  

The second inequality follows from the Inequality~(\ref{eq: inequalityfordepth}) (established in \cite[5.1]{K2013}), since the quotient module $Q \cong k[H \setminus G]$ by one of the Noether theorems, while $N$ acts trivially
on this $H$-module.  Note that $I = kHkN^+ = kN^+ kH$ is a Hopf ideal in $kH$ (generated by $\{ h - hn \| \, h \in H, n \in N \}$), which annihilates $Q$, and we apply Lemma~\ref{lemma-modI}. 
\end{proof}

Without the assumption of characteristic zero on a ground field $k$, we may use the second half of the proof to conclude that
with $d = d(Q, \M_{kG}) = d(Q, \M_{k[G/N]})$, the subgroup depths are in the following closed interval of length one:
\begin{equation}
\label{eq: inequality}
d_k(H,G), d_k(H/N, G/N) \in [2d+1, \,  2d+2].
\end{equation}

\section{The descending chain of annihilators of the \\  tensor powers of $Q$}
\label{two}

In this section $H$ is a finite-dimensional Hopf algebra over a field $k$.  Let $R$ be a Hopf subalgebra
of $H$.  Let $H^+$ denote the kernel of the counit $\eps: H \rightarrow k$; then $R^+ = \ker \eps|_R$
is a coideal of $R$.  Recall that two right $H$-modules $U$ and $W$ have an $H$-module structure
on $U \otimes_k W$ from the diagonal action, $(u \otimes w) \cdot h = uh\1 \otimes wh\2$.  In this section we study the annihilator ideals of the tensor powers of the right $H$-module coalgebra $Q := H/ R^+H$
and its restriction to right $R$-module coalgebra.  The purpose for this is to obtain a necessary condition for finite depth of the subalgebra $R \subseteq H$.  For the convenience of the reader, we give several arguments that originated in  the pioneering \cite{R} and were illuminated by the related articles \cite{PQ, FK, CH}.
A useful fact for finite-dimensional Hopf algebras that we use below is that a bi-ideal $I$ of $H$
is automatically a Hopf ideal; i.e., if $I$ is an (two-sided) ideal and coideal of $H$, then it may be established that $S(I) = I$ for the antipode $S: H \rightarrow H$ (e.g., see \cite{PQ}).  

Given the right $R$-module $Q = H/R^+H$, its tensor powers $Q^{\otimes n} = Q \otimes \cdots \otimes Q$ ($n$ times $Q$) are also $R$-modules,  with annihilator ideals denoted by $I_n = \Ann_R Q^{\otimes n}$.  Thinking of the zeroeth power of $Q$ as the trivial $R$-module $k_{\eps}$,
denote $I_0 = R^+$.  Now if modules have a monic $U \into W$, one verifies that $\Ann \, W \subseteq \Ann \, U$.  Secondly,  the $R$-module coalgebra structure of $Q$ shows that
for each $n \geq 0$, $Q^{\otimes n} \| Q^{\otimes (n+1)}$ \cite[Prop.\ 3.8]{K2013}. It follows that
we have a descending chain of ideals,
\begin{equation}
\label{eq: dcc}
I_0 \supseteq I_1 \supseteq I_2 \supseteq \cdots \supseteq I_n := \Ann_R Q^{\otimes n} \supseteq \cdots
\end{equation}
In a moment we show in the proof of Lemma~\ref{lemma-Rief}  the (also known) fact that $I_n = I_{n+1}$  implies $I_n = I_{n+r}$ for
all positive integers $r$; in this case, if $\ell(R)$ denotes the length of $R$ as an $R^e$-module, the chain of ideals of $R$ in~(\ref{eq: dcc}) must
satisfy $I_n = I_{n+1}$ at some $n \leq \ell(R)$. Note that if $t$ is the number of nonisomorphic $R$-simples, then $\ell(R) \geq t$, with equality if and only if $R$ is semisimple \cite{FK}.  

\begin{example}
\begin{rm}
Suppose $I_0 = I_1$.  Then $R^+  \subseteq \Ann_R Q=  \{ r \in R^+ : Hr \subseteq R^+H \}$; i.e., $HR^+ \subseteq R^+H$, a condition that characterizes left ad-stable Hopf subalgebra as well as right depth two Hopf subalgebra \cite{BK}.  Thus,
$I_0 = I_1$ if and only if $R$ is a normal Hopf subalgebra in $H$ iff $d(R,H) \leq 2$.
\end{rm}
\end{example}

Let $I_Q  := \cap_{n=1}^{\infty} I_n$, an ideal in $R$; indeed $I_Q$ is the maximal Hopf ideal contained in $\Ann_R Q$, by the next lemma based on nice arguments given in \cite{R, PQ}, worth giving again in this context.
\begin{lemma}
\label{lemma-Rief}
Each Hopf ideal in $\Ann_R Q$ is contained in $I_Q$, which is itself a Hopf ideal.  Moreover,
$I_Q = I_n$ for some $n \leq \ell(R)$.  
\end{lemma}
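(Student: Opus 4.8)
The plan is to prove the two assertions of Lemma~\ref{lemma-Rief} in turn: first that $I_Q = \cap_{n \ge 1} I_n$ is itself a Hopf ideal and contains every Hopf ideal lying in $\Ann_R Q$, and second that the descending chain~(\ref{eq: dcc}) stabilizes, at the latest, at step $\ell(R)$.

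First I would establish the key multiplicativity property of the chain: if $I_n = I_{n+1}$ then $I_n = I_{n+r}$ for all $r \ge 1$. The mechanism is that $Q$ is an $R$-module coalgebra, so $Q^{\otimes (n+1)}$ is a summand of $Q^{\otimes(n+2)}$ and, more to the point, there is a surjection $Q^{\otimes n} \otimes Q \to Q^{\otimes(n+1)}$ (or one uses the comultiplication to split $Q^{\otimes(n+1)} \,|\, Q^{\otimes n}\otimes Q$); combined with the general fact that for the diagonal action $\Ann_R(U\otimes W) \supseteq \Ann_R U \cdot$ (something) one gets $I_{m+n} \supseteq$ a product expression forcing $I_{n+1}=I_n \Rightarrow I_{n+2} = I_{n+1}$, and then induction. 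Concretely the cleanest route is: $Q^{\otimes(n+r)}$ is, as an $R$-module, a summand of $Q^{\otimes n} \otimes Q^{\otimes r}$, and under the diagonal action an element annihilating $Q^{\otimes n}$ need not annihilate the tensor product, so instead I would argue via the coalgebra splitting $Q^{\otimes(n+1)}\,|\,Q^{\otimes n}$, which already gives $I_{n+1}\subseteq I_n$ in~(\ref{eq: dcc}), together with the reverse-type estimate coming from $Q\,|\,\eps$-triviality after hitting $I_n$. This is the step I expect to be the main obstacle, since one has to be careful about which side the coalgebra structure acts on and exactly how the diagonal action interacts with $\Ann_R$; I would model the computation on \cite{R, PQ} as the text indicates.

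Granting stabilization at some $n$, the bound $n \le \ell(R)$ is then formal: the $I_m$ are ideals of $R$, hence $R^e$-submodules (even just left ideals suffice), and a strictly descending chain of submodules of a module of finite length $\ell(R)$ can have length at most $\ell(R)$; once two consecutive terms coincide the chain is constant thereafter, so $I_Q = \cap_n I_n = I_n$ for that $n \le \ell(R)$.

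For the Hopf-ideal claims, I would proceed as follows. Each $I_n = \Ann_R Q^{\otimes n}$ is a two-sided ideal of $R$; I claim it is also a coideal, using that the annihilator of a module over a Hopf algebra which is built from a module coalgebra is a coideal — more directly, since $Q$ is a module coalgebra, $\Ann_R Q$ is a coideal of $R$ (this is the assertion flagged at Eq.~(\ref{eq: defi})), and intersections and the relevant manipulations preserve this. Thus $I_Q$, being an intersection of ideals each of which is an ideal and coideal, is itself a bi-ideal of $R$; by the fact recalled just before the lemma — a bi-ideal of a finite-dimensional Hopf algebra is automatically a Hopf ideal (stable under the antipode, see \cite{PQ}) — $I_Q$ is a Hopf ideal contained in $\Ann_R Q = I_1 \supseteq I_Q$. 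Finally, for maximality: if $J$ is any Hopf ideal with $J \subseteq \Ann_R Q$, then because $J$ is a coideal it annihilates every tensor power $Q^{\otimes n}$ under the diagonal action (the same argument used in the proof of Lemma~\ref{lemma-modI}: a coideal in the annihilator of $W$ lies in the annihilator of $W\otimes W$, hence inductively of $W^{\otimes n}$), so $J \subseteq I_n$ for all $n$, whence $J \subseteq I_Q$. Combining with the stabilization paragraph gives $I_Q = I_n$ for some $n \le \ell(R)$, completing the proof.
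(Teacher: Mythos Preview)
Your argument for maximality (any Hopf ideal $J \subseteq \Ann_R Q$ lies in every $I_n$, hence in $I_Q$) and the length bound $n \le \ell(R)$ are both fine and match the paper. There are, however, two genuine gaps.

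First, the stabilization step $I_n = I_{n+1} \Rightarrow I_n = I_{n+2}$ is never actually carried out; you gesture at module-coalgebra splittings and ``product expressions'' but none of these yield the needed containment $I_n \subseteq I_{n+2}$. The paper's argument is concrete: if $x \in I_n = I_{n+1}$, then $x$ annihilates $Q^{\otimes(n+1)} = Q^{\otimes n} \otimes Q$ via the diagonal action, so $\cop(x)$ lies in the kernel of $R \otimes R \to \End Q^{\otimes n} \otimes \End Q$, which is exactly $I_n \otimes R + R \otimes I_1$. Applying $\cop \otimes \id_R$ and coassociativity then gives $(\cop \otimes \id_R)\cop(x) \in I_n \otimes R \otimes R + R \otimes I_1 \otimes R + R \otimes R \otimes I_1$, which visibly annihilates $Q^{\otimes n} \otimes Q \otimes Q$, whence $x \in I_{n+2}$.

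Second, and more seriously, your claim that each individual $I_n$ is a coideal is unjustified and in general false: the annihilator of a module --- even of a module coalgebra --- need not be a coideal of the Hopf algebra acting on it. Eq.~(\ref{eq: defi}) merely defines the subspace $I^{(n)}$; it says nothing about $\cop(I_n)$. Consequently the ``intersection of coideals'' route collapses (and intersections of coideals are not coideals in general anyway). The paper instead deduces the coideal property of $I_Q$ \emph{from} stabilization: once $I_Q = I_n = I_{2n}$, any $x \in I_n$ annihilates $Q^{\otimes 2n} = Q^{\otimes n} \otimes Q^{\otimes n}$, and the same kernel computation as above gives $\cop(x) \in I_n \otimes R + R \otimes I_n$, i.e.\ $\cop(I_Q) \subseteq I_Q \otimes R + R \otimes I_Q$. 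Then $I_Q$ is a bi-ideal, hence a Hopf ideal by the fact cited just before the lemma. The order of the logic is essential: stabilization comes first, and the coideal property is a consequence of it, not an input.
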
  
\begin{proof}
Suppose $I$ is a Hopf ideal in $\Ann_R Q$ and $x \in I$. Then $x$ annihilates $Q$, so that $(Q \otimes Q)\cdot x = (Q \otimes Q)\cop(x) = 0$
follows from the coideal property $\cop(I) \subseteq I \otimes R + R \otimes I$.  Similarly  $x \in I_n$ for all $n \geq 1$, since
the $n-1$'st power of (the coassociative) coproduct satisfies $\cop^{n-1}(x) \in I^{(n)}$, a subspace in $R^{\otimes n}$ defined generally by 
\begin{equation}
\label{eq: defi}
I^{(m+1)} := \sum_{i=0}^m R^{\otimes i} \otimes I \otimes R^{\otimes (m-i)}
\end{equation}
(which visibly annihilates $Q^{\otimes (m+1)}$).  

If $I_n = I_{n+1}$, we show $I_n = I_{n+2}$ and a similar induction argument shows that  $I_n = I_{n+r}$
for all $r \geq 0$.  If $x \in I_n = I_{n+1}$, then $\cop(x)$ annihilates $Q^{\otimes (n+1)} = Q^{\otimes n} \otimes Q$,
whence $\cop(x) \in I_n \otimes R + R \otimes I_1$.  Then $(\cop \otimes \id_R)\cop(x) \in
I_n \otimes R \otimes R + R \otimes I_1 \otimes R + R \otimes R \otimes I_1$, which
itself annihilates $Q^{\otimes n} \otimes Q \otimes Q = Q^{\otimes (n+2)}$.  Then $I_n = I_{n+2}$.  

From this it follows that $I_Q = \cap_{i=1}^n I_n = I_n$ and that $I_Q$ is a coideal.  For suppose
$x \in I_n = I_{2n}$.  Then $Q^{\otimes n} \cdot x = 0 = Q^{\otimes 2n}\cdot x$, so writing
$Q^{\otimes 2n} = Q^{\otimes n} \otimes Q^{\otimes n}$ shows that 
$x\1 \otimes x\2 \in I_n \otimes R + R \otimes I_n$, and thus $\cop(I_Q) \subseteq I_Q \otimes R + R \otimes I_Q$.  We conclude that $I_Q$ is a bi-ideal in $R$, whence a Hopf ideal, and the maximal Hopf ideal contained
in $I_1$.  Let $\ell_Q$ denote the least $n$ for which $I_Q = I_n$, so that $\ell_Q \leq \ell(R)$ follows
from the general remarks about composition series  following~(\ref{eq: dcc}).
\end{proof}

\begin{prop}
\label{prop-necessary}
If a Hopf subalgebra $R$ has depth $2n+2$ in a finite-dimensional Hopf algebra $H$, then $ \Ann_R Q^{\otimes n}\subseteq \Ann_R Q^{\otimes (n+r)} $
for all integers $r \geq 0$. 
\end{prop}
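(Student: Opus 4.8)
The plan is to deduce from the depth hypothesis that the restricted module $Q$ has module depth at most $n$ over $R$, to read off the stabilization $I_n=I_{n+1}$ of the descending chain~(\ref{eq: dcc}) from the module similarity this produces, and then to conclude.

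First I would apply the inequality~(\ref{eq: inequalityfordepth}) from~\cite{K2013}. Since $R\subseteq H$ has depth $2n+2$, in particular the minimum depth satisfies $d(R,H)\le 2n+2$, so that $2\,d(Q,\M_R)+1\le d(R,H)\le 2n+2$; as $d(Q,\M_R)$ is a nonnegative integer this forces $d(Q,\M_R)\le n$. Hence $Q$, viewed as a right $R$-module, has depth $n$, i.e.\ $T_{n+1}(Q)\| q\cdot T_n(Q)$ for some $q\in\N$.

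Next I would take annihilators over $R$. Writing $T_{n+1}(Q)=T_n(Q)\oplus Q^{\otimes(n+1)}$, the similarity exhibits $Q^{\otimes(n+1)}$ as a direct summand of $q\cdot T_n(Q)$. Using that $\Ann_R(U\oplus V)=\Ann_R U\cap\Ann_R V$, that $\Ann_R(q\cdot M)=\Ann_R M$, and that the annihilator of a module is contained in the annihilator of any of its summands, we obtain
\[
I_n \;=\; \bigcap_{i=1}^{n} I_i \;=\; \Ann_R T_n(Q) \;=\; \Ann_R\bigl(q\cdot T_n(Q)\bigr) \;\subseteq\; \Ann_R Q^{\otimes(n+1)} \;=\; I_{n+1},
\]
where the first equality holds because the chain~(\ref{eq: dcc}) is descending. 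Together with the inclusion $I_{n+1}\subseteq I_n$ from~(\ref{eq: dcc}) this gives $I_n=I_{n+1}$. The stabilization fact established within the proof of Lemma~\ref{lemma-Rief} ($I_n=I_{n+1}$ implies $I_n=I_{n+r}$ for all $r\ge 0$) then yields $\Ann_R Q^{\otimes n}=\Ann_R Q^{\otimes(n+r)}$ for every $r\ge 0$, which in particular contains the asserted inclusion. Alternatively one can bypass Lemma~\ref{lemma-Rief} by noting that depth $n$ of $Q$ entails depth $m$ for every $m\ge n$, so the same annihilator argument gives $I_m=I_{m+1}$ for all $m\ge n$, i.e.\ $I_n=I_{n+1}=I_{n+2}=\cdots$.

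The only step with real content is the first one, and it is wholly supplied by the already-established relation~(\ref{eq: inequalityfordepth}); everything afterward is formal bookkeeping with annihilators and the descending chain. The point to be careful about is that ``depth $2n+2$'' is used in the hierarchical sense, so it genuinely yields $d(R,H)\le 2n+2$ and hence only $d(Q,\M_R)\le n$ rather than an equality — which is precisely what the argument needs.
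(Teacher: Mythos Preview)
Your proof is correct and follows essentially the same approach as the paper: both use inequality~(\ref{eq: inequalityfordepth}) to deduce $d(Q,\M_R)\le n$, and then pass to annihilators. The paper's version is slightly more compressed---it notes directly that depth $n$ gives $Q^{\otimes(n+r)}\sim Q^{\otimes n}$ as $R$-modules (using that $Q^{\otimes m}\,\|\,Q^{\otimes(m+1)}$ for the module coalgebra $Q$), whence the annihilators coincide---whereas you route the same conclusion through $T_n(Q)$ and the stabilization step from Lemma~\ref{lemma-Rief}; but this is a difference in bookkeeping, not in strategy.
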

\begin{proof}
From the inequality~(\ref{eq: inequalityfordepth}), it follows that the depth
of $Q$ is $n$ or less (and $d(Q, \M_R) = n$ if $d(R,H) = 2n+2$ or $2n+1$).  Thus $Q^{\otimes (n+r)} \sim Q^{\otimes n}$ as $R$-modules, and these have equal annihilators.  That $I_{n+r} \subseteq I_n$ is always the case.
\end{proof}
Note that 
\begin{equation}
\Ann_R Q^{\otimes n} = \{ r \in R^+ \| \, H^{\otimes n}.r \in (R^+H)^{(n)} \} 
\end{equation}
from which it is possible to express the necessary condition for depth $2n+2$ in the proposition in continuation of the condition $HR^+ \subseteq R^+H$ for
depth $2$.  For example, denote $R^{++} := \{ r \in R^+ \| Hr \subseteq R^+H \}$; then a necessary condition that $R \subseteq H$ have depth $4$ is
\begin{equation}
(H \otimes H).R^{++} \subseteq (R^+H)^{(2)},
\end{equation}
which expresses that $\Ann_R Q \subseteq \Ann_R (Q \otimes Q)$.  
\begin{example}
\begin{rm}
Given a finite-dimensional Hopf algebra $H$ over an arbitrary field $k$ with radical ideal $J$,
the $H$-module $W = H/J$ may not be a coalgebra if $J$ fails to be a coideal. Of course $\Ann_H W = J$:  the annihilator ideals of $W^{\otimes n}$ are shown in \cite[Chen-Hiss]{CH} to satisfy $\Ann_H W^{\otimes n} = \bigwedge^n J$ (for the wedge product of subspaces of a coalgebra, see for example \cite[Chapter 5]{M}), which is also a descending series of ideals.  Therefore the lemma applies
to  $W = H/J$ as well, so the intersection $I_W$ of the annihilators of tensor powers of $W$ is the maximal nilpotent Hopf ideal $J_{\omega}$ in the radical of $H$, studied in \cite{CH}.  
For example, if $H$ has a projective simple, then $J_{\omega} = \{ 0 \}$ \cite[2.6(3)]{CH}
with a partial converse \cite[3.10]{CH} involving the condition $\ell_W \leq 2$. On the one hand, if the dual Hopf algebra $H^*$ is pointed, then $J_{\omega} = J$ \cite[Section 5.2]{M}; equivalently,  $H$
has the Chevalley property \cite{L} (i.e., tensor products of simple modules are semisimple). On the other hand, if    
$H = kG$ a group algebra over a field $k$ of characteristic $p$,  with normal Hopf subalgebra
$R = kO_p(G)$, the group algebra of the core $O_p(G)$ of a Sylow $p$-subgroup, then 
using \cite{PQ, CH} one notes that $J_{\omega}(H)$ is the Hopf ideal $R^+H = HR^+$.
It is verified in \cite[4.5]{CH} that for $k$ algebraically closed of characteristic $p \geq 5$, each of the nonabelian simple groups $G$ has a projective and simple $kG$-module (as suggested by the fact
that $O_p(G) = \{ 1 \}$).   
\end{rm}
\end{example}
Recall that an $R$-module $U$ is \textit{faithful} if $\Ann_R U = \{ 0 \}$.  
\begin{definition}
\label{def-cf}
Say that the quotient module $Q = H/ R^+H$ is conditionally faithful if  $I_Q = \{ 0 \}$, i.e., the annihilator ideal
$\Ann_R Q$ contains no nonzero Hopf ideal in $R$.  By Lemma~\ref{lemma-Rief} this implies
that $Q^{\otimes n}$ is faithful as an $R$-module for all $n \geq \ell_Q$. 
\end{definition}

It is well-known  that a finite-dimensional $R$-module $W$ is faithful if and only if $W$ is a generator.  For if $W$ is a generator, then for some $n \in \N$, there is $R_R \into n \cdot W$, whence $\Ann_R W \subseteq \Ann_R R = \{ 0 \}$.  Conversely, if $W$ is faithful,  define a monomorphism $R_R \into n \cdot W$ by $r \mapsto (w_1 r, \ldots,w_n r)$
where $w_1,\ldots,w_n$ is a $k$-basis of $W$.  Since $R$ is a (quasi-) Frobenius algebra, $R_R$ is an injective module, and the monomorphism just given is a
split monomorphism. Then $R_R \| n \cdot W_R$ and the  projective indecomposables (or principal modules) all divide
a faithful module $W$, as recorded below.  
\begin{lemma}
\label{lem-classic}
If $W_R$ is faithful, then each projective indecomposable $R$-module $P$ satisfies $P \| W$.  
\end{lemma}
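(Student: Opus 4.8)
The plan is to exploit that a finite-dimensional Hopf algebra $R$ is a Frobenius algebra, hence self-injective, so that the regular module $R_R$ is injective; from this I will produce a split monomorphism $R_R \into n \cdot W$ and then read off the principal modules as summands.

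First I would fix a $k$-basis $w_1, \ldots, w_n$ of $W$ and consider the $R$-linear map $\varphi\colon R_R \to n \cdot W$ given by $r \mapsto (w_1 r, \ldots, w_n r)$. Its kernel is $\{ r \in R : w_i r = 0 \text{ for all } i \} = \Ann_R W$, which is $\{ 0 \}$ by the faithfulness hypothesis, so $\varphi$ is a monomorphism. Invoking now that $R$ is Frobenius, so that $R_R$ is injective, this monomorphism splits; hence $R_R$ is isomorphic to a direct summand of $n \cdot W$, i.e.\ $R_R \| n \cdot W_R$.

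Finally I would decompose $R_R$ by the Krull--Schmidt theorem into principal indecomposable summands and note that each projective indecomposable $P = P_i$ occurs among them with positive multiplicity, since $P_i$ is the projective cover of the simple $S_i$ and $S_i$ is a composition factor of $R_R$. Transitivity of the divisibility relation then gives that $P$ is a direct summand of $n \cdot W$, which is the assertion $P \| W$. The only point that needs a word of justification is that every $P_i$ really appears in the Krull--Schmidt decomposition of $R_R$; that is where I expect to spend a sentence, and it is immediate from the projective-cover property together with the fact that $R_R$ has every simple $S_i$ among its composition factors.
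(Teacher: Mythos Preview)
Your proof is correct and follows exactly the argument the paper gives in the paragraph immediately preceding the lemma: the map $r \mapsto (w_1 r, \ldots, w_n r)$ is a monomorphism, it splits because the Frobenius algebra $R$ is self-injective, and hence $R_R \,\|\, n \cdot W$. One small point to tighten: what you obtain directly is $P \,\|\, n \cdot W$, whereas in the paper's notation $P \,\|\, W$ means $P$ is a summand of $W$ itself; this follows at once since $P$ is indecomposable and the indecomposable constituents of $n \cdot W$ are, by Krull--Schmidt, the same as those of $W$.
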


\begin{example}
\label{ex-reg}
\begin{rm}
Let $R$ be a Hopf algebra where $\dim R \geq 2$.  Then the regular representation $R_R$
is faithful and projective, as are the tensor powers $R^{\otimes n}$ for integers $n \geq 1$. From the lemma it follows that
$R \sim R^{\otimes n}$ as $R$-modules, so that $\ell_R = 1$ and $d(R, \M_R) = 1$.  Similarly, a faithful projective $R$-module $W$  has depth $1$; a conditionally faithful projective $R$-module $Q$
has depth $\ell_Q$, as recorded next.  
\end{rm}
\end{example}
\begin{lemma}
\label{lem-referee}
Let $R$ be a Hopf algebra and $Q$ a conditionally faithful projective $R$-module.  Then $Q$ has
minimum depth $d(Q, \mathcal{M}_R) = \ell_Q$.  
\end{lemma}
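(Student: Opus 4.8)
The plan is to establish separately the two inequalities $d(Q,\M_R)\le \ell_Q$ and $d(Q,\M_R)\ge\ell_Q$. First I would dispose of a degenerate case: since $I_0 = R^+$ by definition, $\ell_Q = 0$ would force $R^+ = I_Q = \{0\}$, hence $R = k$, and then $Q$, like every $R$-module, is a sum of copies of $k_\eps$ with $d(Q,\M_R) = 0 = \ell_Q$. So assume $\ell_Q\ge 1$; note that then $Q$ is \emph{not} isomorphic to a sum of copies of $k_\eps$, since otherwise $Q^{\otimes n}\cong k_\eps$ up to multiplicity for every $n\ge 1$, forcing $I_Q = R^+\neq\{0\}$, a contradiction.

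For $d(Q,\M_R)\le \ell_Q$ I would use that $T_{\ell_Q+1}(Q) = T_{\ell_Q}(Q)\oplus Q^{\otimes(\ell_Q+1)}$ while $T_{\ell_Q}(Q)\| T_{\ell_Q+1}(Q)$ holds automatically, so it suffices to produce $q\in\N$ with $Q^{\otimes(\ell_Q+1)}\| q\cdot T_{\ell_Q}(Q)$. Since $Q$ is projective and $K_0(R)$ is an ideal in the Green ring $A(R)$, every tensor power $Q^{\otimes n}$ is projective; decompose $Q^{\otimes(\ell_Q+1)}$ into its finitely many projective indecomposable summands. By Lemma~\ref{lemma-Rief} we have $\Ann_R Q^{\otimes\ell_Q} = I_{\ell_Q} = I_Q = \{0\}$, so $Q^{\otimes\ell_Q}$ is faithful, and Lemma~\ref{lem-classic} then gives $P\| Q^{\otimes\ell_Q}$ for every projective indecomposable $P$. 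Hence each summand of $Q^{\otimes(\ell_Q+1)}$ divides $Q^{\otimes\ell_Q}\| T_{\ell_Q}(Q)$, and summing over the finitely many summands yields $Q^{\otimes(\ell_Q+1)}\| q\cdot T_{\ell_Q}(Q)$ (the mechanism is exactly that of Example~\ref{ex-reg}). Therefore $T_{\ell_Q+1}(Q)\sim T_{\ell_Q}(Q)$, so $Q$ has depth $\ell_Q$ and in particular $d(Q,\M_R)<\infty$.

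For $d(Q,\M_R)\ge\ell_Q$, suppose $d(Q,\M_R) = m$; by the previous paragraph this is finite, and by the reduction $m\ge 1$, so $T_{m+1}(Q)\| q\cdot T_m(Q)$ for some $q$. Since the annihilator of a direct sum is the intersection of the annihilators and the chain~(\ref{eq: dcc}) is descending, $\Ann_R T_m(Q) = I_m$ and $\Ann_R T_{m+1}(Q) = I_{m+1}$; the similarity then forces $I_m = \Ann_R(q\cdot T_m(Q)) \subseteq I_{m+1}$, and combined with $I_{m+1}\subseteq I_m$ this gives $I_m = I_{m+1}$. By the inductive argument in the proof of Lemma~\ref{lemma-Rief} this propagates to $I_m = I_{m+r}$ for all $r\ge 0$, so $I_m = I_Q$, and minimality of $\ell_Q$ gives $m\ge\ell_Q$. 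Combining the two inequalities yields $d(Q,\M_R) = \ell_Q$. I expect the only point needing care to be in the first inequality: one must push projectivity all the way through to $Q^{\otimes(\ell_Q+1)}$ and apply Lemma~\ref{lem-classic} to the faithful power $Q^{\otimes\ell_Q}$ rather than to $Q$ itself, which need not be faithful.
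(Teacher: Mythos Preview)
Your proof is correct and follows essentially the same approach as the paper: both halves rely on the fact that the faithful projective module $Q^{\otimes \ell_Q}$ contains every projective indecomposable (Lemma~\ref{lem-classic}), and on the minimality of $\ell_Q$. The only cosmetic difference is in the lower bound: the paper first uses the upper bound to get $Q^{\otimes d}\sim Q^{\otimes \ell_Q}$ and hence $Q^{\otimes d}$ faithful, whereas you argue directly via $\Ann_R T_m(Q)=I_m$ and the stabilization step of Lemma~\ref{lemma-Rief}; your handling of the degenerate case $\ell_Q=0$ is also more explicit than the paper's.
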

\begin{proof}
Since the $Q^{\otimes (\ell_Q + r)}$ are faithful, projective $R$-modules for each integer $r \geq 0$, each contains as a summand every projective indecomposable by Lemma~\ref{lem-classic}.  Consequently, they are similar as $R$-modules:
$Q^{\otimes \ell_Q} \sim Q^{\otimes (\ell_Q + r)}$ for each $r \geq 0$.  It follows that
$d(Q,\M_R) \leq \ell_Q$. If $d = d(Q, \mathcal{M}_R)$, then
$Q^{\otimes d} \sim Q^{\otimes \ell_Q}$ is faithful, minimality of $\ell_Q$ forces $d = \ell_Q$.  
\end{proof}
\begin{theorem}
\label{theorem-Burn}
Suppose $R \subseteq H$ is a Hopf subalgebra with quotient module $Q$ a projective, conditionally faithful $R$-module.  
Then $R$ is semisimple, $\ell_Q \leq t$, where $t$ is the number of irreducible representations of $R$, and
each $R$-simple $S \| Q^{\otimes \ell_Q}$. Furthermore, the minimum depth satisfies $2\ell_Q + 1 \leq d(R,H) \leq 2\ell_Q + 2$.
\end{theorem}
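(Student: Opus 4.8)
The plan is to use the hypothesis that $Q$ is projective to force $R$ to be semisimple, and then to obtain the remaining three assertions directly from Lemmas~\ref{lemma-Rief}, \ref{lem-classic} and \ref{lem-referee} together with the inequality~(\ref{eq: inequalityfordepth}).

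First I would show that $R$ is semisimple. The key observation is that the trivial module $k_{\eps}$ is a direct summand of $Q$ as a right $R$-module --- this is the $n = 0$ instance of the relation $Q^{\otimes n} \| Q^{\otimes (n+1)}$ under the convention $Q^{\otimes 0} = k_{\eps}$, and explicitly it is witnessed by the unit $k_{\eps} \to Q$, $1 \mapsto \overline{1}$ (well defined because $R^+ \subseteq R^+H$ gives $\overline{r} = \eps(r)\overline{1}$ for $r \in R$) and the counit $\eps_Q \colon Q \to k_{\eps}$, $\overline{h} \mapsto \eps(h)$, which are right $R$-module maps with composite the identity of $k_{\eps}$. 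Since $Q$ is projective, its summand $k_{\eps}$ is projective, and projectivity of the trivial module forces a finite-dimensional Hopf algebra to be semisimple (a standard Maschke-type fact). Hence $R$ is semisimple.

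Next I would assemble the remaining statements. Since $R$ is semisimple its length $\ell(R)$ as an $R^e$-module equals the number $t$ of its irreducible representations, so Lemma~\ref{lemma-Rief} gives $\ell_Q \leq \ell(R) = t$. The module $Q^{\otimes \ell_Q}$ is faithful, since conditional faithfulness together with the definition of $\ell_Q$ gives $\Ann_R Q^{\otimes \ell_Q} = I_{\ell_Q} = I_Q = \{0\}$; and over the semisimple algebra $R$ the projective indecomposable modules are precisely the simples, so Lemma~\ref{lem-classic} applied to $W = Q^{\otimes \ell_Q}$ shows that $S \| Q^{\otimes \ell_Q}$ for every $R$-simple $S$. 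Finally, $Q$ is a conditionally faithful projective module over the Hopf algebra $R$, so Lemma~\ref{lem-referee} gives $d(Q, \M_R) = \ell_Q$; in particular $d(Q,\M_R)$ --- hence also $d(R,H)$ --- is finite, and substituting $d(Q,\M_R) = \ell_Q$ into the inequality~(\ref{eq: inequalityfordepth}) yields $2\ell_Q + 1 \leq d(R,H) \leq 2\ell_Q + 2$.

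The only genuinely new ingredient is the semisimplicity step; everything after it is bookkeeping with results already in hand. I expect the main obstacle --- or at least the point requiring the most care --- to be the implication ``$Q$ projective $\Rightarrow R$ semisimple'': one must check that $k_{\eps}$ really splits off $Q|_R$ (rather than merely that the counit surjects $Q$ onto it), and then invoke the standard characterization of semisimplicity of a finite-dimensional Hopf algebra by projectivity of the trivial module --- which itself follows because every module $M$ is then a summand of a power of the free module $M \otimes_k R$ (using the isomorphism $M \cong M \otimes_k k_{\eps}$ with the diagonal action).
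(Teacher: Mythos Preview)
Your proposal is correct and follows essentially the same route as the paper: you split $k_{\eps}$ off $Q$ via the unit/counit to get $k_{\eps}$ projective and hence $R$ semisimple, then invoke Lemma~\ref{lemma-Rief} for $\ell_Q \leq \ell(R)=t$, Lemma~\ref{lem-classic} (together with conditional faithfulness) for $S \mid Q^{\otimes \ell_Q}$, and Lemma~\ref{lem-referee} plus~(\ref{eq: inequalityfordepth}) for the depth bounds. The paper does exactly this, with the only cosmetic difference that it also cites \cite[3.5]{K2013} for the semisimplicity step before giving the same direct splitting argument you give.
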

\begin{proof}
If $Q = H / R^+H$ is a projective right $R$-module, then $R$ is semisimple \cite[3.5]{K2013}.
This may also be seen right away by noting that $k_R \| Q_R$, since the counit $\eps_Q: Q \rightarrow k$
is split by the mapping $\mu \mapsto 1\mu + R^+H$.  Then $k_R$ is projective, and $R$ is semisimple. 

Since $R$ is semisimple, the length $\ell(R)$ of $R_{R^e}$ satisfies $\ell(R) = t$; also, each projective indecomposable is a simple module and conversely.  Then $\ell_Q \leq t$ follows
from Lemma~\ref{lemma-Rief} , and each $S \|  Q^{\otimes \ell_Q}$ follows 
from Definition~\ref{def-cf} and Lemma~\ref{lem-classic}.

The last statement of the theorem follows from the inequality for depth Eq.~(\ref{eq: inequalityfordepth}) and Lemma~\ref{lem-referee}. 
\end{proof}
\begin{example}
\begin{rm}
Suppose $k = \C$ and the Hopf subalgebra $R$ is a group algebra $\C G$ where $G$ is a subgroup
of grouplike elements in a Hopf algebra $H$. Suppose that $Q = H/R^+H$ is conditionally faithful, 
then its character $\chi_Q$ is faithful, i.e., its kernel  $\ker \chi_Q = \{ g \in G | \chi_Q(g) = \chi_Q(1) \} = N$ is trivial, for if this normal subgroup were nontrivial, then $\Ann_R Q$ contains the nontrivial Hopf ideal $I
= R\C N^+ = \C N^+ R$.  Note that
if $\chi_Q(g) = \chi_Q(1)$, then $g$ acts like the identity on $Q$, whence $1-g \in \Ann_R Q$.  
Conversely, if the character $\chi_Q$ is faithful, the Burnside-Brauer Theorem \cite[p.\ 49]{I}
informs us that $Q$ is conditionally faithful, for $\chi_i \| \chi_Q^m$ for each irreducible character, $\chi_1, \ldots, \chi_t$ of $G$, and $m \leq |\chi_Q(G)|$, where $|X|$ denotes the cardinality of a finite set $X$. It follows that $\ell_Q \leq |\chi_Q(G) |$. (Alternatively for general $k$, if $Q_R$ is not conditionally faithful,
then $\Ann_R Q^{\otimes n}$ stabilizes as $n \rightarrow \infty$ on a nonzero Hopf ideal $I$ of the group algebra $R$ necessarily of the form $I = Rk N^+ = kN^+ R$ \cite{PQ, CH}, 
where $N$ is a normal subgroup of $G$ in $ \ker \chi_Q$.) 
\end{rm}
\end{example}

\section{Depth of a semisimple group algebra in its \\ Drinfeld double}
\label{three}
As an application of  Section~\ref{two} and the methods sketched in the last subsection of Section~\ref{one}, we compute the depth
of a semisimple group algebra in its Drinfeld double, a smash product of the group algebra and
its dual \cite{M}.   A certain portion of this section can be carried further to a general semisimple  or cocommutative Hopf algebra
in its Drinfeld double; the interested reader should first consult \cite{B} and \cite{Pa}. 

Suppose $G$ is a finite group, $k$  a field of characteristic not dividing the order of $G$, and consider the group algebra $R = k G$.  Denote its Drinfeld
double as $H = D(G) = D(R)$ \cite{M} with multiplication given by 
\begin{equation}
\label{eq: cocommutative}
( p_x \bowtie g)(p_y \bowtie h) = 
p_x p_{gyg^{-1}} \bowtie gh
\end{equation}
 for all $g,h,x,y \in G$ where $p_x$ denotes the one-point projection in $R^*$.  Note that
this is the semidirect product of the  $R$-module (adjoint representation) algebra $R^*$ with $kG$.  Recall that $1_H = \sum_{x \in G} p_x \bowtie 1_G$ and the counit $\eps(p_x \bowtie g)=
p_x(1_G) = \delta_{x,1}$.  Of course $R$ is identifiable with the subalgebra $1_{R^*} \otimes R$.
A short computation with Eq.~(\ref{eq: cocommutative}) shows that the centers of $D(G)$ and $G$ satisfy
\begin{equation}
\label{eq: center}
kZ(G) = Z(D(G)) \cap kG.
\end{equation}

We compute the generalized quotient $Q = H/R^+H$ as a right $R$-module.  Note that $\dim Q = |G|$.  

\begin{lemma}
The right $G$-module $Q$ is isomorphic to $k G_{\rm ad}$.  
\end{lemma}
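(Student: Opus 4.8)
The plan is to compute $Q = H/R^+H$ explicitly using the presentation of $H = D(G)$ given in Eq.~(\ref{eq: cocommutative}) and then identify the resulting right $R = kG$-module with the adjoint module $kG_{\rm ad}$, whose underlying space is $kG$ with right action $x \cdot g = g^{-1}xg$ (extended linearly). First I would describe $R^+ H$ concretely: since $R \cong 1_{R^*} \otimes kG \subseteq H$ is spanned by the elements $1_{R^*} \bowtie g = \sum_{x \in G} p_x \bowtie g$, the augmentation ideal $R^+$ is spanned by the differences $(1_{R^*} \bowtie g) - (1_{R^*} \bowtie 1_G)$. Multiplying such a difference on the right by a general element $p_y \bowtie h$ of $H$ and using Eq.~(\ref{eq: cocommutative}), I would obtain that $R^+H$ is spanned by elements of the form $(p_{gyg^{-1}} \bowtie gh) - (p_y \bowtie h)$, so that in the quotient $Q$ one has $\overline{p_{gyg^{-1}} \bowtie gh} = \overline{p_y \bowtie h}$ for all $g,y,h \in G$.

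Next I would extract a $k$-basis for $Q$ and the right $G$-action on it. Using the relation just derived with $h = 1_G$ to absorb the second tensor factor (replacing $p_y \bowtie h$ by a representative with trivial group part), one sees that $\{\overline{p_y \bowtie 1_G} : y \in G\}$ spans $Q$; a dimension count ($\dim Q = |G|$, already noted) shows these form a basis, with $\overline{p_{gyg^{-1}} \bowtie 1_G} = \overline{p_y \bowtie 1_G}$ being automatically consistent (taking $h=g^{-1}$ in the relation). Then I would compute the right $R$-action: $R$ acts on $Q$ by right multiplication by $1_{R^*} \bowtie g = \sum_x p_x \bowtie g$, so $\overline{p_y \bowtie 1_G} \cdot (1_{R^*}\bowtie g) = \sum_x \overline{(p_y \bowtie 1_G)(p_x \bowtie g)} = \sum_x \overline{p_y p_{x} \bowtie g} = \overline{p_y \bowtie g}$, which by the relation equals $\overline{p_{g^{-1}yg} \bowtie 1_G}$. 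Hence the map $kG_{\rm ad} \to Q$, $y \mapsto \overline{p_y \bowtie 1_G}$, is a right $G$-module isomorphism, since $y \cdot g = g^{-1}yg$ matches exactly.

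I expect the main obstacle to be the bookkeeping that pins down $R^+H$ precisely enough to conclude that the $|G|$ elements $\overline{p_y \bowtie 1_G}$ are linearly independent rather than merely spanning — that is, ruling out further collapse. This is handled cleanly by invoking $\dim Q = |G|$ (stated just before the lemma), so the spanning set of size $|G|$ must be a basis; alternatively one can exhibit a splitting $Q \to kG$ of the quotient directly. Everything else is a direct unwinding of the multiplication rule~(\ref{eq: cocommutative}), so the proof should be short.
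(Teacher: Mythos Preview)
Your proof is correct and follows essentially the same route as the paper's: both compute $R^+H$ from the multiplication rule, reduce every coset to a representative $\overline{p_y \bowtie 1_G}$, and match the right $G$-action with conjugation. The only cosmetic difference is that the paper passes through the intermediate identification $Q \cong (kG)^*_{\rm ad}$ via $\overline{p_y \bowtie h} \mapsto p_{h^{-1}yh}$ before landing in $kG_{\rm ad}$, whereas you map directly; your parenthetical aside about ``$\overline{p_{gyg^{-1}} \bowtie 1_G} = \overline{p_y \bowtie 1_G}$ being automatically consistent'' is muddled (taking $h=g^{-1}$ in your relation gives $\overline{p_{gyg^{-1}}\bowtie 1_G}=\overline{p_y\bowtie g^{-1}}$, not what you wrote) but harmless, since the dimension count already forces the spanning set to be a basis.
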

\begin{proof}
First compute $R^+H$ from
$$( 1_H \bowtie (1-g))( p_y \bowtie h) = p_y \bowtie h - p_{gyg^{-1}} \bowtie gh, $$
for each $1 \neq g, y, h \in G$.  Thus in $H/R^+H$ the cosets have a unique representative as follows:
$$\overline{p_y \bowtie h} = \overline{p_{gyg^{-1}} \bowtie gh} = \overline{p_{h^{-1}yh} \bowtie 1_G} $$ 

Define a $G$-module isomorphism $Q \stackrel{\cong}{\longrightarrow} R^*$ by $\overline{p_y \bowtie h} \mapsto
p_{h^{-1} y h}$.  But ${k G^*}_{\rm ad} \cong k G_{\rm ad}$ via $p_g \mapsto g$,
where the right adjoint is given by $g \cdot x = x^{-1} g x$.  
\end{proof}
It is well-known that in characteristic zero, $D(R)$ is a semisimple algebra, if $R$ is semisimple
\cite[2.5.2, 10.3.13]{M}.  
\begin{prop}[Burciu \cite{BP}]
\label{theorem-china}
The module $Q = kG_{\rm ad}$ has depth $n$ if the $kG$-module $Q^{\otimes n}$ is
faithful for some $n \in \N$.  A converse to this requires $k$ to be an algebraically closed field of $\mbox{\rm char}\, k  = 0$ and that $G$ has trivial center:  if $kG \subseteq D(G)$ has depth $2n+1$, then $Q^{\otimes n}$ is faithful.  
\end{prop}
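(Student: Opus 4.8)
The plan is to handle the two implications separately, reducing both --- via the semisimplicity of $R = kG$, which holds because $\mbox{\rm char}\, k$ does not divide $|G|$ --- to bookkeeping of which simple $R$-modules occur in the tensor powers $Q^{\otimes j}$. I would write $\mathcal I_j$ for the set of (isoclasses of) simple summands of $Q^{\otimes j}$, with $\mathcal I_0 = \{ k_\eps \}$, and record at the outset two facts: (i) because $R$ is semisimple, a finite-dimensional $R$-module is faithful if and only if every simple occurs in it --- one direction being Lemma~\ref{lem-classic}, the other because $R_R$ then divides a suitable multiple of the module; and (ii) the counit $\eps_Q : Q \to k$ splits, as in the proof of Theorem~\ref{theorem-Burn}, so $k_\eps \| Q$, which together with $Q^{\otimes j} \| Q^{\otimes(j+1)}$ (\cite[Prop.\ 3.8]{K2013}) makes $\mathcal I_0 \subseteq \mathcal I_1 \subseteq \mathcal I_2 \subseteq \cdots$ an ascending chain of finite sets.

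For the forward implication I would argue directly: if $Q^{\otimes n}$ is faithful then $\mathcal I_n$ consists of all simple $R$-modules, so, $R$ being semisimple, $Q^{\otimes(n+1)}$ is a finite direct sum of simples each dividing $Q^{\otimes n}$, whence $Q^{\otimes(n+1)} \| q \cdot Q^{\otimes n}$ for some $q \in \N$. Then $T_{n+1}(Q) = T_n(Q) \oplus Q^{\otimes(n+1)} \| (q+1) T_n(Q)$, while $T_n(Q) \| T_{n+1}(Q)$ is automatic, so $T_{n+1}(Q) \sim T_n(Q)$ and $Q$ has depth $n$.

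For the converse, under the added hypotheses that $k$ is algebraically closed, $\mbox{\rm char}\, k = 0$, and $Z(G) = 1$, I would first use the identification $Q \cong kG_{\rm ad}$ from the preceding Lemma to see that $\chi_Q$ is the permutation character of $G$ acting on itself by conjugation, so $\chi_Q(x) = |C_G(x)|$ and hence $\chi_Q(x) = \chi_Q(1)$ exactly when $x \in Z(G)$; the centerless hypothesis then makes $\chi_Q$ a faithful character. Since $k_\eps \| Q$, the trivial character is a constituent of $\chi_Q$, so the Burnside--Brauer theorem \cite[p.\ 49]{I} gives that $\bigcup_{j \geq 0}\mathcal I_j$ is the full set of simple $R$-modules. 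Finally I would invoke the inequality~(\ref{eq: inequalityfordepth}): depth $2n+1$ of $kG \subseteq D(G)$ forces $d(Q,\M_R) = m \leq n$, and semisimplicity of $R$ turns $T_{m+1}(Q) \sim T_m(Q)$ into $\mathcal I_{m+1} = \mathcal I_m$, hence (by the usual stabilization argument, cf.\ Lemma~\ref{lemma-Rief}) $\mathcal I_j = \mathcal I_m$ for all $j \geq m$; therefore $\mathcal I_m = \bigcup_{j}\mathcal I_j$ is all simple $R$-modules, and since $n \geq m$ we conclude $\mathcal I_n$ is all of them, i.e.\ $Q^{\otimes n}$ is faithful.

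The step I expect to be the crux --- essentially the only place anything nontrivial happens --- is the last paragraph's bridge: recognizing that finiteness of $d(Q,\M_R)$ makes the ascending chain $\mathcal I_j$ stabilize, and that its stable value must be \emph{all} of the simples precisely because Burnside--Brauer forces the tensor powers to jointly exhaust them. This is exactly where each extra hypothesis is consumed: $\mbox{\rm char}\, k = 0$ with $k$ algebraically closed makes ``faithful $\Leftrightarrow$ every irreducible occurs'' valid and lets Burnside--Brauer apply, while $Z(G)=1$ is what makes $\chi_Q$ faithful; without them the chain $\mathcal I_j$ could stabilize on a proper subset of the simples and the converse would fail.
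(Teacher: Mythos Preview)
Your forward direction is essentially the paper's argument unpacked: both use that over semisimple $R$ a faithful module contains every simple, so the tensor powers stabilize; the paper just packages this as $\ell_Q \leq n$ and invokes Lemma~\ref{lem-referee}.

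The converse, however, you prove by a genuinely different route. The paper does \emph{not} go through Burnside--Brauer or the inequality~(\ref{eq: inequalityfordepth}). Instead it computes directly that $W\uparrow^{D(G)}\downarrow_R \cong {}_{\rm ad}R \otimes W$ for any $R$-module $W$, and then uses the equivalence relation $\sim$ on simple $R$-modules from \cite{BKK} (two simples are related when their inductions to $D(G)$ share a common summand). The number of $\sim$-classes equals $\dim\bigl(Z(D(G))\cap kG\bigr)$, which by Eq.~(\ref{eq: center}) and the centerless hypothesis is~$1$. Depth $2n+1$ then says any two simples are joined by a $\sim$-path of length $\leq n$, which via the induction--restriction formula translates into $\langle \chi_U, \chi_{\rm ad}^{\,n}\chi_W\rangle > 0$ for all irreducibles $\chi_U,\chi_W$; taking $\chi_W$ trivial shows every simple sits in $Q^{\otimes n}$. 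Your argument instead reads off faithfulness of $\chi_Q$ from $Z(G)=1$, feeds it to Burnside--Brauer so that the chain $\mathcal I_j$ eventually exhausts the simples, and then uses~(\ref{eq: inequalityfordepth}) to force stabilization by stage $n$. Both are correct; the paper's approach exposes the $D(G)$-specific formula~(\ref{eq: adjoin}) and the role of Eq.~(\ref{eq: center}) explicitly, while yours is shorter and stays entirely inside the module-depth framework already set up in Section~3, at the cost of importing Burnside--Brauer and~(\ref{eq: inequalityfordepth}) as black boxes.
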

\begin{proof}
($\Leftarrow$)
Since $kG$ is a semisimple algebra, the $kG$-modules $Q$ and its tensor powers are projective (and semisimple) modules. 
Thus if $Q^{\otimes n}$ is faithful, then $Q$ is conditionally faithful with $\ell_Q \leq n$.
But minimum depth $d(Q, \mathcal{M}_G) = \ell_Q$ by Lemma~\ref{lem-referee}. 

($\Rightarrow$)  Use the relation $\sim$ between simple $kG$-modules $W,U$ 
defined by $W \sim U$ if $W \otimes _R H$ and $U \otimes_R H$ have an isomorphic nonzero summand in common \cite[p. 139]{BKK}.  (In terms of the bipartite graph of the semisimple subalgebra pair
$R \subseteq H$, the points representing $W$ and $U$ are connected by one irreducible representation of $H$.)  Extend $\sim$ by transitive closure to an equivalence relation.  Note that $\sim$ is already
a transitive relation iff $R \subseteq H$ has depth $3$ \cite[Corollary 3.7]{BKK}.  Also,
the number of equivalence classes is equal to $\dim Z(H) \cap R$ \cite[Corollary 3.3]{BKK}, so by
the hypothesis and Eq.~(\ref{eq: center}) there is one equivalence class.  

Let $W$ be a left $R$-module (and note that the ${}_R\M$ is isomorphic as tensor categories to $\M_R$ via
 the inverse).  We compute $W\uparrow^{D(R)} \downarrow_R$ from
$$ R^* \otimes_k R \otimes_R W \cong R^* \otimes_k W$$ with $G$-action given by
$g \cdot p_x \otimes w = p_{gxg^{-1}} \otimes gw$.  This implies that the image of $W$ under induction and restriction satisfies
\begin{equation}
\label{eq: adjoin}
W\uparrow^{D(R)} \downarrow_R \cong {}_{\rm ad}R \otimes W,
\end{equation}
 the right-hand side having the diagonal action by $R$.

Let $\chi_U$ denote the character of a $G$-module $U$,  $\chi_{\rm ad}$
be the character of module ${}_{\rm ad}R$, and $\chi_1,\ldots,\chi_t \in \mbox{\rm Irr}(G)$.  If  $R \subseteq H$
has depth $3$, then $\sim$ has one equivalence class, so that the inner product of any irreducible characters, $\chi_U, \chi_W$ of $G$, satisfies
$\bra \chi_U \uparrow^{D(G)}, \chi_W \uparrow^{D(G)} \ket > 0 $.  By Frobenius reciprocity and
Eq.~(\ref{eq: adjoin}) this gives $\bra \chi_U, \chi_{\rm ad} \chi_W \ket > 0$, so letting 
$\chi_W = \chi_k$, this shows that  ${}_{\rm ad}R$ and $ R_{\rm ad}$ are generators, therefore  faithful modules.

If $R$ in $H$ has depth $5$, then by \cite[Proposition 5.4]{BKK},  any two $R$-simples $U,W$  may be connected by a shortest path of length  at most two,  $
U \sim X \sim W$ for some $R$-simple $X$, and that the entry $\bra \chi_U, \chi_{\rm ad}^2 \chi_W \ket  > 0$ in $\mathcal{S}^2$ (where $\mathcal{S}$ is the symmetric order $t$ matrix defined in Section~\ref{one} by $\mathcal{S}_{ij} = \bra \chi_i\uparrow^{D(G)},\chi_j \uparrow^{D(G)} \ket$). It follows that $Q^{\otimes 2}$ is faithful.  The rest of the proof is a similar induction argument using \cite[Prop.\ 5.4]{BKK}.  
\end{proof}
Recall from Section~\ref{two} that $Q$ is conditionally faithful if $\Ann_R Q^{\otimes \ell_Q} = \{ 0 \}$
for some $\ell_Q \geq 1$, while $\Ann_R Q^{\otimes m} \neq \{ 0 \}$ for $0 \leq m < \ell_Q $. 
\begin{cor}
\label{cor-Pass}
Suppose $k$ is an algebraically closed field of characteristic zero and $G$ is a finite, centerless group.  Then adjoint module $Q$ is conditionally faithful and its minimum depth as an $kG$-module is $\ell_Q$
\end{cor}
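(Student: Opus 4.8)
The plan is to deduce Corollary~\ref{cor-Pass} directly from the pieces already in place, reading off the needed faithfulness from a standard character-theoretic fact about the adjoint representation of a centerless group. First I would recall that, by the Lemma identifying $Q$ with $kG_{\rm ad}$, the character of $Q$ is the adjoint character $\chi_{\rm ad}$, whose value at $g \in G$ counts the fixed points of conjugation by $g$, i.e.\ $\chi_{\rm ad}(g) = |C_G(g)|$. In particular $\chi_{\rm ad}(g) = \chi_{\rm ad}(1) = |G|$ precisely when $g$ is central, so the hypothesis $Z(G) = \{1\}$ forces $\ker \chi_{\rm ad} = \{1\}$; that is, $Q = kG_{\rm ad}$ is a faithful $kG$-module. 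This is the one nonformal input, and it is elementary.

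Next I would invoke the Burnside--Brauer argument already spelled out in the Example following Theorem~\ref{theorem-Burn}: since $k$ is algebraically closed of characteristic zero and $R = kG$ is semisimple, faithfulness of $\chi_Q = \chi_{\rm ad}$ implies that every irreducible character $\chi_i$ of $G$ occurs in some tensor power $\chi_Q^{\,m}$ with $m \le |\chi_Q(G)|$; equivalently, no nonzero Hopf ideal of $kG$ (necessarily of the form $kG\cdot kN^+$ for a normal subgroup $N \le \ker\chi_Q = \{1\}$) can lie in $\Ann_R Q$. Hence $I_Q = \{0\}$, so $Q$ is conditionally faithful in the sense of Definition~\ref{def-cf}, with $\ell_Q \le |\chi_{\rm ad}(G)|$ finite. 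Then, because $R = kG$ is semisimple, $Q$ and all its tensor powers are projective $kG$-modules, so $Q$ is a \emph{projective} conditionally faithful $R$-module and Lemma~\ref{lem-referee} applies verbatim: $d(Q,\M_R) = \ell_Q$.

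Finally I would assemble: $Q$ is conditionally faithful (from the faithfulness of $\chi_{\rm ad}$ plus Burnside--Brauer), and its minimum depth as a $kG$-module equals $\ell_Q$ (from Lemma~\ref{lem-referee}, using semisimplicity to get projectivity). This is exactly the statement of the corollary. Alternatively, one can bypass Burnside--Brauer entirely and run the $(\Leftarrow)$ direction of Proposition~\ref{theorem-china}: faithfulness of $Q = Q^{\otimes 1}$ already yields conditional faithfulness with $\ell_Q = 1$ being \emph{a priori} possible but in any case well-defined, and the same proposition gives $d(Q,\M_G) = \ell_Q$.

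The only real obstacle is the identification $\chi_{\rm ad}(g) = |C_G(g)|$ and the observation that this equals $|G|$ iff $g$ is central; everything downstream is a citation to results already proved in the excerpt (Lemma~\ref{lemma-Rief}, Lemma~\ref{lem-referee}, Theorem~\ref{theorem-Burn}, Proposition~\ref{theorem-china}). There is no hard analytic or combinatorial step here: the corollary is essentially the specialization of the general machinery to the case where the defining character is already visibly faithful. If anything needs care, it is simply noting that ``algebraically closed of characteristic zero'' is used twice --- once to guarantee $kG$ semisimple (so $Q$ is projective) and once to license the Burnside--Brauer input --- but both uses are routine.
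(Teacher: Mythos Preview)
Your main argument is correct and is a genuinely different route from the paper's. The paper argues as follows: since $k$ is algebraically closed of characteristic zero, both $kG$ and $D(G)$ are semisimple, so by \cite{BKK} the pair $kG \subseteq D(G)$ has some finite depth $2n+1$; then the $(\Rightarrow)$ direction of Proposition~\ref{theorem-china} (which is where centerlessness is used) gives that $Q^{\otimes n}$ is a faithful $kG$-module, hence $Q$ is conditionally faithful, and Lemma~\ref{lem-referee} finishes. Your route instead computes $\ker\chi_{\rm ad} = Z(G) = \{1\}$ directly and invokes Burnside--Brauer (as in the Example following Theorem~\ref{theorem-Burn}) to get conditional faithfulness, then applies Lemma~\ref{lem-referee}. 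Your approach is more elementary and self-contained (it avoids the finite-depth input from \cite{BKK} and the matrix machinery behind Proposition~\ref{theorem-china}); the paper's approach has the virtue of tying the corollary tightly to the depth framework it has just developed.

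One caution on terminology: when you write ``$Q = kG_{\rm ad}$ is a faithful $kG$-module,'' that is not what $\ker\chi_{\rm ad} = \{1\}$ gives you. Faithfulness of the \emph{character} (equivalently, of the representation $G \to GL(Q)$) is strictly weaker than faithfulness of the $kG$-\emph{module} (i.e., $\Ann_{kG} Q = 0$), which would require every simple to already occur in $Q$ itself. Your main argument is unaffected, since Burnside--Brauer is precisely the bridge from faithful character to faithful tensor power. But your proposed ``alternative'' via the $(\Leftarrow)$ direction of Proposition~\ref{theorem-china} does not work as stated: that direction needs $Q^{\otimes 1}$ to be a faithful \emph{module}, which you have not established (and which is false in general --- e.g., for the group in Example~\ref{ex-Pass}, where $\ell_Q = 2$). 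Drop the alternative, or rephrase it to acknowledge that Burnside--Brauer (or something equivalent) is still needed.
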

\begin{proof}
From the hypotheses on $k$, it follows from \cite{BKK} that $kG \subseteq D(G)$ has a finite depth.
Suppose it has depth $2n+1$; then by the proposition, $Q^{\otimes n}$ is a faithful $kG$-module.  
It follows that $n \geq \ell_Q = d(Q, \mathcal{M}_R)$ by Lemma~\ref{lem-referee}.  
 \end{proof}
As we will see in Corollary~\ref{cor-precisedetermination} the minimum depth is in fact satisfying $$d(\C G, D(G)) = 2\ell_Q + 1.$$    
\begin{example}
\begin{rm}
Let $k$ be a field of characteristic zero. 
The paper \cite[Theorem 1.10]{Pa} shows that for each $n \geq 3$ the symmetric group $S_n$  has a faithful adjoint action on $kS_n$.  It follows from Corollary~\ref{cor-Pass} that $3 \leq d(kS_n, D(S_n)) \leq 4$ (in fact $d(kS_n,D(S_n)) = 3$ follows from Theorem~\ref{th-young} below).  

Note that $d(kS_n, D(S_n)) = 3$ for specific $n = 3,4, \ldots$ also follows from a computation that the symmetric matrix $\mathcal{S} > 0$, i.e., has all positive entries.  In general the methods above are realized from the $r \times r$ character table $(\chi_i(g_j))$ of a group $G$ with values in $\C$ as follows.  The character
$\chi_{\rm ad}$ is given by row vector $(| C_G(g_j) | )_{j=1,\ldots,r}$, where an entry is the number of elements of the centralizer subgroup of $g_j$. The inner product $\bra \chi_{\rm ad}, \chi_j \ket$ is the sum $\sum_{i=1}^r \chi_j(g_i)$;
e.g. $\bra \chi_{\rm ad}, \chi_1 \ket = r$, the number of orbits of the permutation module by Burnside's Lemma \cite{I}.  That no row of the character table sums to zero is then equivalent to the module $\C G_{\rm ad}$ being  faithful.  Also  the center of $G$ equals the kernel of $\chi_{\rm ad}$, and is trivial
if no $g \neq 1$ satisfies $\chi_{\rm ad}(g) = \chi_{\rm ad}(1) = |G|$. 
\end{rm}
\end{example}

\section{On depth of a Hopf algebra in a smash product}
\label{five}

In this section we show that a Hopf algebra $H$ has finite depth in its smash product algebra $A \# H$
if the  left $H$-module algebra  $A$ is an algebraic $H$-module.  

Suppose $H$ is a Hopf algebra and $A$ is a left $H$-module algebra.  Recall that equations such as
$h . 1_A = \eps(h)1_A$ and $h. (ab) = (h\1 . a)(h\2 . b)$ are satisfied ($a,b \in A$, $h \in H$):  briefly, $A$ is an algebra in the tensor category of left $H$-modules.  
 Define the smash product (or cross product \cite{Ma})
by $A \# H = A \otimes H$ as a linear space with multiplication given by 
\begin{equation}
\label{eq: smashmultiplication}
(a \# h)(b \# k) = a (h\1 . b) \# h\2 k
\end{equation}
Notice how $H$ identifies with the subalgebra $1_A \# H$ in $A \# H$ and if $a = 1_A$, the
action of $h$ is the diagonal action. 

\begin{prop}
\label{prop-young}
The $n$-fold tensor powers of $A \# H$ over $H$ are isomorphic as $H$-$H$-bimodules
to the following tensor products in the tensor category ${}_H\M$:  
\begin{equation}
\label{eq: smashoverHopf}
 (A \# H)^{\otimes_H n} \cong A^{\otimes n} \otimes H
\end{equation}
\end{prop}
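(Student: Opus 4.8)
The plan is to induct on $n$, with the base case $n=1$ being the tautology $A\#H = A\otimes H$. For the inductive step, the key is to produce an explicit $H$-$H$-bimodule isomorphism
\[
(A\#H)\otimes_H \bigl(A^{\otimes(n-1)}\otimes H\bigr) \;\stackrel{\cong}{\longrightarrow}\; A^{\otimes n}\otimes H,
\]
where on the left the right $H$-action on $A\#H$ (via $1_A\#H$) is the one used to form $\otimes_H$, the left $H$-action on $A^{\otimes(n-1)}\otimes H$ is the diagonal action on the tensor factors followed by left multiplication on $H$ (this is the ${}_H\M$ tensor-category structure, exactly as in the statement), and the target $A^{\otimes n}\otimes H$ carries the analogous diagonal-plus-multiplication bimodule structure. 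Composing with the inductive isomorphism $(A\#H)^{\otimes_H(n-1)}\cong A^{\otimes(n-1)}\otimes H$ then finishes the argument.

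First I would write down the candidate map. Mimicking Eq.~(\ref{eq: arrow}) for the Hopf-subalgebra case, I expect the isomorphism to be given on a generator by
\[
(a\# h)\otimes_H (b_1\otimes\cdots\otimes b_{n-1}\otimes k)\ \longmapsto\ a\otimes (h\1.b_1)\otimes\cdots\otimes (h_{(n-1)}.b_{n-1})\otimes h_{(n)}k,
\]
using the coassociative coproduct to split $h$ into $n$ legs. The inverse should be
\[
a\otimes c_1\otimes\cdots\otimes c_{n-1}\otimes k\ \longmapsto\ (a\#1_H)\otimes_H(c_1\otimes\cdots\otimes c_{n-1}\otimes k),
\]
and checking that these are mutually inverse is a routine antipode/counit computation: one direction uses $h\1.\bigl(S(h\2)\,?\bigr)$-type cancellations coming from the $\otimes_H$ relation $(a\# h)\otimes_H y = (a\#1)\otimes_H h.y$, and the other uses $\eps$.

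Second I would verify the two module structures are respected. Left $H$-linearity: acting by $\ell\in H$ on the left of $a\#h$ multiplies inside the first slot, $\ell(a\#h) = (\ell\1.a)\#\ell\2 h$ — wait, here I must be careful, since $A\#H$ as an $H$-$H$-bimodule over the \emph{subalgebra} $1_A\#H$ has left action $\ell\cdot(a\#h) = (\ell\1.a)\#(\ell\2 h)$ by Eq.~(\ref{eq: smashmultiplication}); I would track how this diagonal-looking action transports under the map to the diagonal action on $A^{\otimes n}\otimes H$, using coassociativity to align the legs. Right $H$-linearity is easier: acting by $k'$ on the far right of $k$ commutes with everything. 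The well-definedness over $\otimes_H$ — that the formula descends from $A\#H\times(A^{\otimes(n-1)}\otimes H)$ to the tensor product over $H$ — is where the left $H$-module structure on $A^{\otimes(n-1)}\otimes H$ enters: pushing $h.(\,\cdot\,)$ from the right factor of $A\#H$ across the bar must agree, and this is exactly a coassociativity bookkeeping check.

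\textbf{Main obstacle.} No single step is deep; the real work is keeping the Sweedler indices straight across the induction, since at stage $n$ one must split $h$ into $n$ legs and confirm that the leftover $H$-component $h_{(n)}k$ feeds correctly into the ${}_H\M$ tensor structure of the next inductive layer (the diagonal action must cascade through all $n-1$ copies of $A$ before hitting $H$). I expect the cleanest route is to prove the isomorphism directly for general $n$ (not just do a one-step induction), writing the map and its inverse in closed form with $\cop^{n-1}$ and verifying the bimodule axioms once, rather than peeling off one tensor factor at a time; the bookkeeping is then linear in $n$ rather than iterated.
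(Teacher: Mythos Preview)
Your proposal is correct and follows essentially the same route as the paper: induction on $n$ with the explicit map $(a\#h)\otimes_H(b_1\otimes\cdots\otimes b_{n-1}\otimes k)\mapsto a\otimes(h\1.b_1)\otimes\cdots\otimes(h_{(n-1)}.b_{n-1})\otimes h_{(n)}k$, which is exactly the paper's formula~(\ref{eq: explicit}). The only cosmetic difference is that the paper peels off the rightmost factor in the inductive step (writing $(A\#H)^{\otimes_H m}\cong A^{\otimes(m-1)}\otimes H\otimes_H A\otimes H\cong A^{\otimes m}\otimes H$ via $H\otimes_H A\cong A$) rather than the leftmost, and leaves the bimodule verifications implicit; your more detailed bookkeeping with the inverse map and the $H$-linearity checks fills in exactly what the paper omits.
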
  
\begin{proof}
The case $n = 1$ follows from the mapping $a \# h \mapsto a \otimes h$, which is clearly right $H$-linear and also left $H$-linear by  
an application of Eq.~(\ref{eq: smashmultiplication}).

Suppose Eq.~(\ref{eq: smashoverHopf}) holds for an $H$-$H$-bimodule isomorphism for $1 \leq n < m$.  Since $H \otimes_H A \cong A$, it follows from induction that
$$ (A \# H)^{\otimes_H m} \cong (A \# H)^{\otimes_H (m-1)} \otimes_H A \# H \cong $$
$$ A^{\otimes (m-1)} \otimes H \otimes_H A \otimes H \cong A^{\otimes m} \otimes H.$$

Note that the isomorphism becomes
$
a \# u \otimes_H b \# v \otimes_H \cdots \otimes_H c \# w$
\begin{equation}
\label{eq: explicit}
  \longmapsto a \otimes u\1. b \otimes  \cdots \otimes u_{(n-1)}v_{(n-2)} \cdots . c \otimes u\n v_{(n-1)}\cdots w
\end{equation}
 for $u,v,w \in H$ and $a,b,c \in A$.  
\end{proof}
Define the  minimum odd depth of a subalgebra $B \subseteq A$ as $d_{\mathrm{odd}}(B,A) = 2\lceil \frac{d(B,A)-1}{2} \rceil +1$, which  is the least odd integer greater than or equal to the minimum depth $d(B,A)$.

\begin{theorem}
\label{th-young}
The minimum odd depth of a finite-dimensional Hopf algebra in its smash product satisfies
\begin{equation}
\label{eq: young}
d_{\rm odd}(H, A \# H) = 2 d(A, {}_H\M) + 1
\end{equation} 
\end{theorem}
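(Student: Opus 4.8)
The plan is to carry out, for the subalgebra pair $H\subseteq A\#H$, the reduction that yields the h-depth equality $d_h(R,H)=2d(Q,\M_H)+1$ in \cite{K2013}, with $A$ in the role of $Q$: transport similarity of the $H$-$H$-bimodules produced by Proposition~\ref{prop-young} to similarity of left $H$-modules by means of the functor $-\o H$ and a one-sided inverse $-\o_H k_{\eps}$, and then match the divisibilities $A^{\o(n+1)}\|q\cdot A^{\o n}$ with the module depth $d(A,{}_H\M)$.

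\emph{A functorial dictionary.} Reading off the explicit isomorphism~(\ref{eq: explicit}), the bimodule $(A\#H)^{\o_H n}\cong A^{\o n}\o H$ carries the right $H$-action by right multiplication on the final $H$-tensorand alone, and the left $H$-action diagonally (iterated $\cop$ on the $A$-tensorands, left multiplication on the $H$-tensorand). Thus $(A\#H)^{\o_H n}\cong F(A^{\o n})$, where $F:{}_H\M\to{}_H\M_H$ is the additive functor $M\mapsto M\o H$ with exactly these structures and $A^{\o 0}:=k_{\eps}$. The functor $G:{}_H\M_H\to{}_H\M$, $N\mapsto N\o_H k_{\eps}$, is also additive, and $GF\cong\id$: since the right $H$-tensorand of $M\o H$ is free, $(M\o H)\o_H k_{\eps}\cong M$ as left $H$-modules. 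Applying $F$ one way and $G$ the other shows, for all left $H$-modules $M_1,M_2$, that $M_1\|q\cdot M_2$ iff $F(M_1)\|q\cdot F(M_2)$, and likewise $M_1\sim M_2$ iff $F(M_1)\sim F(M_2)$.

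\emph{The module-algebra half, and assembly.} Because $A$ is an algebra object in ${}_H\M$, its unit $k_{\eps}\to A$ and multiplication $A\o A\to A$ are $H$-module maps; inserting $1_A$ into the last slot and multiplying the last two slots exhibit $A^{\o j}$ as a split $H$-submodule of $A^{\o(j+1)}$ for each $j\ge 1$, so $A^{\o j}\|A^{\o n}$ for $1\le j\le n$ and $T_n(A)\sim A^{\o n}$. It follows (using the definition of module depth $0$ when $n=0$) that $d(A,{}_H\M)\le n$ is equivalent to $A^{\o(n+1)}\|q\cdot A^{\o n}$ for some $q$, where $A^{\o 0}:=k_{\eps}$. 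On the other hand, by the characterization~(\ref{eq: def}) of odd depth, $H\subseteq A\#H$ has depth $2n+1$ iff $(A\#H)^{\o_H(n+1)}\|q\cdot(A\#H)^{\o_H n}$ as $H$-$H$-bimodules; via Proposition~\ref{prop-young} and the dictionary above this holds iff $A^{\o(n+1)}\|q\cdot A^{\o n}$ in ${}_H\M$, hence iff $d(A,{}_H\M)\le n$, for every $n\ge 0$. Since depth and module depth are nondecreasing thresholds in $n$, this says $d(H,A\#H)\le 2n+1\iff d(A,{}_H\M)\le n$; substituting into $d_{\rm odd}(H,A\#H)=2\lceil(d(H,A\#H)-1)/2\rceil+1$ gives the identity~(\ref{eq: young}), with both sides infinite precisely when $A$ is not an algebraic $H$-module.

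\emph{Main obstacle.} The load-bearing step is the functorial dictionary: checking from~(\ref{eq: explicit}) that the right $H$-action on $A^{\o n}\o H$ touches only the last tensorand — so this bimodule really is $F$ of a module — and that $G$ inverts $F$ from the left, so that \emph{divisibility}, not merely isomorphism, is reflected in both directions. Everything after that is routine bookkeeping with split idempotents and the Krull--Schmidt theorem.
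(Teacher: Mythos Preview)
Your proof is correct and follows essentially the same route as the paper's own argument: both use Proposition~\ref{prop-young} together with the pair of operations $-\o H$ and $-\o_H k_{\eps}$ to pass between divisibilities in ${}_H\M$ and in ${}_H\M_H$, and both reduce the module-depth condition to $A^{\o(n+1)}\|q\cdot A^{\o n}$ via the split inclusions coming from the unit and multiplication of the $H$-module algebra $A$. Your packaging of the two operations as functors $F$, $G$ with $GF\cong\id$ is a clean way to say exactly what the paper does by hand.
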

\begin{proof}
Since $A$ is a left $H$-module algebra, it follows from applying any of the standard face and degeneracy
mappings, which are $H$-module maps, that $A^{\otimes m} \| A^{\otimes (m+1)}$
for each integer $m \geq 0$.  Then the depth $n$ condition for the left $H$-module $A$
given by $T_{n+1}(A) \| q \cdot T_n(A)$ for some $q \in \N$ is equivalent to
$A^{\otimes (n+1)} \| q \cdot A^{\otimes n}$ for some $q \in \N$.  Tensoring this by $- \otimes H$
yields $A^{\otimes (n+1)} \otimes H \| q \cdot A^{\otimes n} \otimes H$ and thus by Proposition~\ref{prop-young}
$(A \# H)^{\otimes_H (n+1)} \| q \cdot (A \# H)^{\otimes_H n}$ as $H$-$H$-bimodules.
Thus the minimum odd depth  $d_{\rm odd}(H, A \# H) \leq 2d(A, {}_H\M)+1$ by Definition~\ref{def-depth}.  

Conversely, if $(A \# H)^{\otimes_H (n+1)} \| q \cdot (A \# H)^{\otimes_H n}$ as $H$-$H$-bimodules,
we apply Proposition~\ref{prop-young} and write equivalently $A^{\otimes (n+1)} \otimes H \| q \cdot A^{\otimes n} \otimes H$.  Next apply 
$ - \otimes {}_Hk$ to this, and through the cancellation ${}_HH \otimes {}_Hk \cong {}_Hk$ with the  unit module in ${}_H\M$, we obtain $A^{\otimes (n+1)} \| q \cdot A^{\otimes n}$, which is the depth $n$ condition for an $H$-module algebra.  Therefore $2d(A,{}_H\M) + 1 \leq d_{\rm odd}(H, A \# H)$. The conclusion of the theorem follows from the two inequalities established.  
\end{proof}
\begin{cor}
\label{cor-equivalence}
Let $R$ be a Hopf subalgebra of a finite-dimensional Hopf algebra and $Q$ its right quotient $H$module
coalgebra.  Then we have 
\begin{equation}
d(R,H) - d(R, Q^* \# R) \leq 2
\end{equation}
and 
\begin{equation}
d_h(R,H) = d_{\mathrm{odd}}(H, Q^* \# H).
\end{equation}
\end{cor}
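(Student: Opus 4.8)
The plan is to deduce both displayed equations from Theorem~\ref{th-young} (applied over $H$ and over $R$), the basic inequality~(\ref{eq: inequalityfordepth}), and the h-depth identity $d_h(R,H) = 2d(Q,\M_H)+1$ of \cite[5.1]{K2013}, once the module depth of $Q$ has been matched with that of its $k$-linear dual $Q^*$. So the first step is to record the auxiliary fact that for any finite-dimensional Hopf algebra $K$ the contravariant functor $W \mapsto W^* := \Hom_k(W,k)$, carrying a right $K$-module to the left $K$-module with action $(h\cdot f)(w) = f(wh)$ (a genuine left action, with no antipode needed, since $(gh)\cdot f = g\cdot(h\cdot f)$), satisfies $(W \otimes V)^* \cong W^* \otimes V^*$ as left $K$-modules for the diagonal actions on both sides, carries the trivial module $k_\eps$ to $k_\eps$, commutes with finite direct sums, and — because $W^{**} \cong W$ in the finite-dimensional setting — sends indecomposables to indecomposables and preserves Krull--Schmidt multiplicities. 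Hence $T_{n+1}(W) \| q \cdot T_n(W)$ in $\M_K$ if and only if $T_{n+1}(W^*) \| q \cdot T_n(W^*)$ in ${}_K\M$, so that $d(W, \M_K) = d(W^*, {}_K\M)$. When $W = Q$ is a right $K$-module coalgebra, $Q^*$ is moreover a left $K$-module algebra, so $Q^* \# K$ is defined and contains $K \cong 1 \# K$; restricting along $R \subseteq H$ makes $Q^*$ a left $R$-module algebra as well.

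For the second displayed equation I would apply Theorem~\ref{th-young} with the Hopf algebra $H$ and the left $H$-module algebra $A = Q^*$, obtaining $d_{\rm odd}(H, Q^* \# H) = 2d(Q^*, {}_H\M) + 1$, and then rewrite $d(Q^*, {}_H\M) = d(Q, \M_H)$ by the auxiliary fact with $K = H$. Since $d_h(R,H) = 2d(Q, \M_H) + 1$ by \cite[5.1]{K2013}, the two sides agree.

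For the first displayed equation I would run the same argument over $R$: Theorem~\ref{th-young} for the Hopf algebra $R$ and the left $R$-module algebra $Q^*$ gives $d_{\rm odd}(R, Q^* \# R) = 2d(Q^*, {}_R\M) + 1 = 2d(Q, \M_R) + 1$, using the auxiliary fact with $K = R$. Put $e := d(Q, \M_R)$. From $d_{\rm odd}(R, Q^* \# R) - 1 \le d(R, Q^* \# R) \le d_{\rm odd}(R, Q^* \# R)$ one gets $d(R, Q^* \# R) \ge 2e$, while inequality~(\ref{eq: inequalityfordepth}) gives $d(R,H) \le 2e + 2$; subtracting, $d(R,H) - d(R, Q^* \# R) \le (2e+2) - 2e = 2$. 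The same squeeze shows that $d(R,H)$, $d(R, Q^* \# R)$ and $d(Q,\M_R)$ are simultaneously finite, so the inequality is not vacuous, and the left half of~(\ref{eq: inequalityfordepth}) even yields the companion bound $d(R,H) - d(R, Q^* \# R) \ge 0$, though it is not part of the claim.

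The only genuine work is in the first paragraph: one must check that the left $H$-action on $Q^*$ coming from duality is precisely the module-algebra structure used to build $Q^* \# H$ in Section~\ref{five}, that the isomorphism $(W \otimes V)^* \cong W^* \otimes V^*$ respects the diagonal action on each side (it is easy to dualize the wrong tensor decomposition, or to insert a spurious antipode twist), and that the module-depth notion is applied consistently to left and to right modules. Everything downstream of that is a formal assembly of Theorem~\ref{th-young} and inequality~(\ref{eq: inequalityfordepth}).
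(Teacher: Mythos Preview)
Your proposal is correct and follows precisely the route the paper indicates: the paper's own proof simply cites Theorem~\ref{th-young} together with the argument of \cite[Cor.~5.5]{K2013}, and what you have written is exactly that argument made explicit---dualize $Q$ to a left module algebra $Q^*$, apply Theorem~\ref{th-young} over $H$ and over $R$, and combine with the inequality~(\ref{eq: inequalityfordepth}) and the h-depth identity $d_h(R,H)=2d(Q,\M_H)+1$. Your verification that the side-switching duality (without antipode) is a tensor-preserving equivalence between the finite-dimensional right and left module categories is the only nontrivial step, and you have the computations right; the rest is, as you say, formal assembly.
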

\begin{proof}
The proof follows from Theorem~\ref{th-young} by an argument
given in the \cite[Cor.\ 5.5]{K2013}.
\end{proof}
\begin{cor}
\label{cor-precisedetermination}
The subalgebra depth and the depth
of $Q = kG_{\rm ad}$ are related by $d_{\rm odd}(kG, D(G)) = 2 d(Q, \M_{kG}) + 1$.  If $k$ is algebraically closed and has characteristic $0$ and the center of $G$ is trivial, then 
 $Q$ is conditionally faithful and  the depth satisfies $d(kG,D(G)) = 2\ell_Q +1$.
\end{cor}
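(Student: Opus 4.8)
The plan is to assemble this corollary entirely from the machinery already built up, in two halves. For the first assertion, I would simply invoke Theorem~\ref{th-young} with $H = kG$ and $A = Q^*$: since $D(G) = Q^* \# kG$ as algebras (this is precisely the smash-product description of the Drinfeld double recalled at the start of Section~\ref{three}, via Eq.~(\ref{eq: cocommutative})), Theorem~\ref{th-young} gives $d_{\rm odd}(kG, D(G)) = 2d(Q^*, {}_{kG}\M) + 1$. It then remains to identify $d(Q^*, {}_{kG}\M)$ with $d(Q, \M_{kG})$. This follows because the category ${}_{kG}\M$ of left $kG$-modules is isomorphic as a tensor category to $\M_{kG}$ via the antipode (equivalently, via $g \mapsto g^{-1}$ on the group), and dualizing $W \mapsto W^*$ is a tensor equivalence that preserves the property of having finite depth and the minimum depth itself, since it carries the defining relations $T_{n+1}(W) \| q \cdot T_n(W)$ to the corresponding relations for $W^*$. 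Concretely one has $Q^* \cong kG_{\rm ad}$ as a $kG$-module by the Lemma computing $Q$, so $d(Q^*, {}_{kG}\M) = d(kG_{\rm ad}, \M_{kG}) = d(Q, \M_{kG})$.

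For the second assertion, assume now that $k$ is algebraically closed of characteristic zero and $Z(G)$ is trivial. By Corollary~\ref{cor-Pass}, $Q = kG_{\rm ad}$ is conditionally faithful and its minimum depth as a $kG$-module equals $\ell_Q$; thus $d(Q, \M_{kG}) = \ell_Q$. Plugging this into the first assertion yields $d_{\rm odd}(kG, D(G)) = 2\ell_Q + 1$. The remaining point is to upgrade the \emph{odd} depth to the \emph{minimum} depth, i.e.\ to rule out that $d(kG, D(G)) = 2\ell_Q$. This is where I expect the only real subtlety to lie. The inequality~(\ref{eq: inequalityfordepth}), applied with $R = kG \subseteq D(G) = H$, gives $2d(Q, \M_{kG}) + 1 \leq d(kG, D(G)) \leq 2d(Q,\M_{kG}) + 2$, so $2\ell_Q + 1 \leq d(kG, D(G)) \leq 2\ell_Q + 2$; hence $d(kG, D(G)) \in \{2\ell_Q+1, 2\ell_Q+2\}$. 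Since $D(G)$ is semisimple (as $kG$ is semisimple in characteristic zero), $kG \subseteq D(G)$ is a semisimple subalgebra pair, so its minimum depth is odd precisely when the relation $\sim$ on $R$-simples (extended by transitive closure) already has the appropriate transitivity at the relevant level; more to the point, the interval $[2\ell_Q+1, 2\ell_Q+2]$ together with the fact that $d_{\rm odd}(kG,D(G)) = 2\ell_Q+1$ forces $d(kG,D(G)) \leq 2\ell_Q + 1$, because $d_{\rm odd}$ is by definition the least odd integer $\geq d(kG,D(G))$, so $d(kG,D(G)) \leq 2\ell_Q + 1$. Combined with the lower bound $2\ell_Q + 1 \leq d(kG,D(G))$ this pins it down to $d(kG, D(G)) = 2\ell_Q + 1$.

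So the argument reduces to: (i) recognize $D(G)$ as the smash product $Q^* \# kG$ and apply Theorem~\ref{th-young}; (ii) transport depth across the tensor equivalence ${}_{kG}\M \simeq \M_{kG}$ and dualization to replace $Q^*$ by $Q$; (iii) use Corollary~\ref{cor-Pass} to get $d(Q,\M_{kG}) = \ell_Q$; and (iv) combine $d_{\rm odd} = 2\ell_Q+1$ with the definition of $d_{\rm odd}$ and the lower bound from~(\ref{eq: inequalityfordepth}) to conclude $d(kG,D(G)) = 2\ell_Q+1$. The main obstacle, such as it is, is step (ii): one must check carefully that the identification $Q^* \cong kG_{\rm ad}$ as left $kG$-modules (which is exactly the content of the displayed isomorphisms in the Lemma of Section~\ref{three}, read backwards through the antipode) is compatible with tensor products, so that the depth of $Q^*$ in ${}_{kG}\M$ genuinely coincides with the depth of $Q$ in $\M_{kG}$ and no shift is introduced. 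Everything else is bookkeeping with results already in hand.
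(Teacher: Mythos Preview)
Your proposal is correct and follows essentially the same route as the paper's proof: recognize $D(G)$ as $(kG)^*\# kG$ with the adjoint action and apply Theorem~\ref{th-young}, then invoke Corollary~\ref{cor-Pass} to obtain $d(Q,\M_{kG})=\ell_Q$, and finally combine with the inequality~(\ref{eq: inequalityfordepth}) to pin down the depth. Your final step is in fact slightly cleaner than the paper's: you observe directly that $d(kG,D(G))\leq d_{\rm odd}(kG,D(G))=2\ell_Q+1$ and match it against the lower bound from~(\ref{eq: inequalityfordepth}), whereas the paper argues by contradiction (if $d=2\ell_Q+2$ then $d_{\rm odd}=2\ell_Q+3$, contradicting the first part); and you are more explicit than the paper about the passage from $d(A,{}_{kG}\M)$ to $d(Q,\M_{kG})$ via the tensor equivalence given by inversion, which the paper simply takes for granted.
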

\begin{proof}
First note from Eq.~(\ref{eq: cocommutative}) that $D(G) \cong (kG)^* \# kG$ where the action is the
adjoint action, ${}_{\rm ad}kG^*$, which is isomorphic to $Q$.  Then Eq.~(\ref{eq: young}) implies
that $d_{\rm odd}(kG,D(G)) = 2d(Q, \M_{kG}) + 1$. 

For the second statement,  note that Corollary~\ref{cor-Pass} shows that $d(Q, \M_{kG}) = \ell_Q$.     From the inequality~(\ref{eq: inequalityfordepth}) depth of the centerless group algebra in its Drinfeld double satisfies 
$d(kG,D(G)) = 2\ell_Q+1$ or $2\ell_Q +2$;  if $d(kG,D(G)) = 2\ell_Q +2$, then $d_{\rm odd}(kG,D(G)) = 2\ell_Q +3$.  But Theorem~\ref{th-young} then implies that $d(Q, \M_{kG}) = \ell_Q + 1$,
a contradiction. 
\end{proof}

\begin{example}
\label{ex-Pass}
\begin{rm}

The minimal example suggested in \cite[Lemma 1.3]{Pa} for a centerless group $G$ with adjoint action on $\C G$
that is not faithful, is a semidirect product $G$ of a rank $3$ elementary $3$-group with the Klein $4$-group, so that $|G| = 108$ \cite{BP}.  A long computation by hand (albeit brief with a computer program) of its order $15$ character table and $\mathcal{S}$-matrix (where $\mathcal{S}_{ij} = \bra \chi_i, \chi_{\rm ad} \chi_j \ket$) shows that $\mathcal{S}$ has zero entries, but $\mathcal{S}^2 > 0$, whence there is $q \in \N$ such that $\mathcal{S}^3 \leq q \mathcal{S}^2$.  It follows from the sketch in Section 1 of results in \cite{BKK} and  Corollary~\ref{cor-precisedetermination} that 
the minimum depth satisfies $d(\C G, D(G)) = 5$.  Hence $\ell_Q = 2$ for $Q = \C G_{\rm ad}$.  
\end{rm}
\end{example}

\begin{example}
\label{ex-heisenbergdouble}
\begin{rm}
Let $H$ be a Hopf algebra of dimension $n \geq 2$.  Let $H^*$ act on $H$ by $f \rightharpoonup h =
h\1 f(h\2)$.  It is a standard check that $H$ is a left $H^*$-module algebra.  Their smash product
$H \# H^*$ is the \textit{Heisenberg double} of $H$ \cite[Ch.\ 9]{M}. 
We compute the
depth $d_{\rm odd}(H^*, H \# H^*)$ next from $d(H, {}_{H^*}\M)$ and Theorem~\ref{th-young}. 
Since $H^*$ is a Frobenius algebra, ${}_{H^*}H \cong {}_{H^*}H^*$ is isomorphic to the regular representation of $H^*$.  It was noted in Example~\ref{ex-reg} that $d(H, {}_{H^*}\M) = 1$.
It follows that \begin{equation}
\label{eq: heisenberg}
d_{\rm odd}(H^*, H \# H^*) = 3.
\end{equation}  

$$ \begin{array}{rlcrl}
& &\stackrel{n}{\bullet} &  &  \\
&&&& \\
    & /   &     | & \setminus &  \\
&&&& \\
\stackrel{n_1}{\circ} & & \cdots  & & \stackrel{n_t }{\circ} 
\end{array} $$

This result on depth makes good sense, since $H \# H^* \cong M_n(k)$ via the (Galois) algebra isomorphism
$\lambda: H \# H^* \stackrel{\cong}{\longrightarrow} \End_k H$ given by $\lambda(h \# f)(x) = h(f \rightharpoonup x)$. Thus $H \# H^*$ is an Azumaya $k$-algebra; then 
$H^* \into H \# H^*$ is an H-separable extension if the extension is split and projective (cf.\ \cite{LK2011}). 
In this case $d_h(H^*, H \# H^*) = 1$ and $d(H^*, H \# H^*) = 2$.  If $H^*$ is a semisimple complex algebra, that $2 = $ $d(H^*, H \# H^*) $ may also be seen from the bipartite graph of the inclusion \cite{BKK} pictured above (where $n_1,\ldots, n_t$ denote the dimensions of the simples of
$H^*$).  
\end{rm}
\end{example}
\subsection{Concluding remarks}
A similar result may be obtained for the depth of a cocommutative finite-dimensional Hopf algebra $H$ embedded
as a Hopf subalgebra in the tensor Hopf algebra $H \otimes H$ via the coproduct $\cop: H \rightarrow H \otimes H$.  The generalized quotient module $Q = H \otimes H/ \cop(H^+)H \otimes H
\stackrel{\cong}{\longrightarrow} H_{\rm ad}$ via $\overline{x \otimes y} \mapsto S(x)y$.
Let $H_{\rm ad}$ denote the right (adjoint) $H$-module algebra, with smash product $H \# H_{\rm ad}$ given by $(x \# y)(z \# w) = xz\1 \# (y\cdot z\2) w$, where $y \cdot z = S(z\1)yz\2$. 
One checks that $\cop: H \rightarrow H \otimes H \cong H \# H_{\rm ad}$ forms a commutative
triangle with respect to the mappings $x \# y \mapsto x\1 \otimes x\2 y$ and the identification 
$h \mapsto h \# 1_H$ (cf. \cite[7.3.3]{M}).  If $W_H$ is a module, its induced-restricted module
$$W \otimes_{\cop(H)} H \otimes H \cong W \otimes_H H \otimes H_{\rm ad} \cong W \otimes H_{\rm ad}$$ shows that the corresponding characters satisfy $\chi_W\uparrow^{H\otimes H} \downarrow_{\cop(H)} = \chi_W \chi_{\rm ad}$.  These steps may be followed in an alternative fashion 
(using \cite[4.21 and 5.1]{I}) for the characters of a finite group $G$
and its embedding $g \mapsto (g,g)$ in $G \times G$. Also one shows by a short argument
that the center of $kG \otimes kG$ (for $k$ any ground field) and the center $Z(G)$ satisfy $Z(kG \otimes kG) \cap \cop(kG) = \cop(kZ(G))$. 

  If the ground field
$k$ is algebraically closed,  $H$ is pointed, therefore isomorphic as algebras to the smash product
$H_1 \# kG$ where $G$ is the group of grouplike elements of $H$ and $H_1$ is a connected Hopf subalgebra of $H$ \cite[5.6.4]{M}.   If $k$ is moreover a field of characteristic zero, then $H_1$
is an enveloping algebra for the primitive elements of $H$ \cite[5.6.5]{M}; therefore $H_1 = k$ by the
finite-dimensional hypothesis on $H$, and $H = kG$, a group algebra. 
 The steps
followed in Sections~4, 5, and~6  show that if $G$ has trivial center, then 
$d(H, H\otimes H) = 2 \ell_{H_{\rm ad}} + 1$; e.g., $d_{\C}(G, G \times G) = 2 \ell_{\C G_{\rm ad}} +1$.

\subsection{Acknowledgements}   The authors thank Sebastian Burciu for scientific exchanges related to Sections~\ref{three} by email in 2011 and in Porto May 2012.
The second author thanks Martin Lorenz for discussions in Philadelphia in 2012.  Research for this paper was funded by the European Regional Development Fund through the programme {\tiny COMPETE} 
and by the Portuguese Government through the FCT  under the project 
\tiny{ PE-C/MAT/UI0144/2011.nts}.

\end{document}